\journal{ }
\newtheorem{thm}{Theorem}
[section]
\newtheorem{prop}[thm]{Proposition}
\newtheorem{rem}[thm]{Remark}
\newtheorem{fct}[thm]{Fact}
\newtheorem{defn}[thm]{Definition}
\newtheorem{rmk}[thm]{Remark}
\newproof{proof}{Proof}
\newcommand{\gen}[1]{\left\langle#1\right\rangle}
\newcommand{\D}{\delta}
\begin{document}

\title{On Transformations in the Painlev\'e Family.}
\author{Joel Nagloo}
\ead{jnagloo@gc.cuny.edu}
\date{\today}
\address{Graduate Center, Mathematics\\City University of New York\\NY 10016-4309, United States.}
\pagestyle{plain}
\begin{abstract}
In this paper we show that generic Painlev\'e equations from different families are orthogonal. In particular, this means that there are no general Backlund transformations between Painlev\'e equations from the different families $P_I-P_{VI}$.
\end{abstract}

\begin{keyword}
Painlev\'e equations, Backlund transformations, Model theory
\end{keyword}
\fntext[fn]{Joel Nagloo was partially supported by the NSF grant CCF-0952591}
\maketitle

\section{Introduction}
As well known, the Painlev\'e equations $P_I-P_{VI}$ are given by the following families of second order algebraic differential equations:
\[
\begin{array}{ll}\medskip
P_{I}:\;\;\;\;\;  & \frac{d^2y}{dt^2}=6y^2+t \\\medskip
P_{II}(\alpha):\;\;\;\;\; & \frac{d^2y}{dt^2}=2y^3+ty+\alpha \\\medskip
P_{III}(\alpha,\beta,\gamma,\delta):\;\;\;\;\; & \frac{d^2y}{dt^2}=\frac{1}{y}\left(\frac{dy}{dt}\right)^2-\frac{1}{t}\frac{dy}{dt}+\frac{1}{t}(\alpha y^2+\beta)+\gamma y^3+\frac{\delta}{y} \\\medskip
P_{IV}(\alpha,\beta):\;\;\;\;\; & \frac{d^2y}{dt^2}=\frac{1}{2y}\left(\frac{dy}{dt}\right)^2+\frac{3}{2}y^3+4ty^2+2(t^2-\alpha)y+\frac{\beta}{y} \\\medskip
P_{V}(\alpha,\beta,\gamma,\delta):\;\;\;\;\; & \frac{d^2y}{dt^2}=\left(\frac{1}{2y}+\frac{1}{y-1}\right)\left(\frac{dy}{dt}\right)^2-\frac{1}{t}\frac{dy}{dt}+\frac{(y-1)^2}{t^2}\left(\alpha y+\frac{\beta}{y}\right)+\gamma\frac{y}{t}\\\medskip
& +\delta\frac{y(y+1)}{y-1}\\ \medskip
P_{VI}(\alpha,\beta,\gamma,\delta):\;\;\;\;\; & \frac{d^2y}{dt^2}=\frac{1}{2}\left(\frac{1}{y}+\frac{1}{y-1}+\frac{1}{y-t}\right)\left(\frac{dy}{dt}\right)^2-\left(\frac{1}{t}+\frac{1}{t-1}+\frac{1}{y-t}\right)\frac{dy}{dt}\\\medskip
 & +\frac{y(y-1)(y-t)}{t^2(t-1)^2}\left(\alpha+\beta\frac{t}{y^2}+\gamma\frac{t-1}{(y-1)^2}+\delta\frac{t(t-1)}{(y-t)^2}\right)
\end{array}
\]
where $\alpha,\beta,\gamma,\delta\in\mathbb{C}$. They were isolated by Painlev\'e and Gambier as those ODE's of the form $y'' = f(y,y',t)$ (where $f$ is rational over ${\mathbb C}$) which have the Painlev\'e property: any local analytic solution extends to a meromorphic solution on the universal cover of $P^{1}({\mathbb C})\setminus S$, where $S$ is the finite set of singularities of the equation (including the point at infinity if necessary). 

An important feature of the families $P_{II}-P_{VI}$  is the existence of the so-called general Backlund transformations that take solutions of a Painlev\'e equation in one family, to solutions of a different Painlev\'e equation in the same family. For example, given a solution $z$ of $P_{II}(\alpha)$, then it is not hard to see that 
\begin{eqnarray*}
S(z)&=&-z\\
T_{+}(z)&=&-z-\frac{\alpha+1/2}{z'+z^2+t/2}\\
T_{-}(z)&=&-z+\frac{\alpha-1/2}{z'-z^2-t/2}
\end{eqnarray*}
are solutions of $P_{II}(-\alpha)$, $P_{II}(\alpha+1)$ and $P_{II}(\alpha-1)$ respectively. The transformations $S$, $T_{+}$ and $T_{-}$ generate a group $G$ that provides a representation of the affine Weyl group of type $\tilde{A}_1$ and it is known that similar observations can be made about the other equations $P_{III}-P_{VI}$. The general Backlund transformations have played a crucial role in the classification of algebraic and classical solutions of the Painlev\'e equations.  

If we write $X_{II}(\alpha)$ for the solution set of $P_{II}(\alpha)$, in a differentially closed field, then we see that $S$, $T_{+}$ and $T_{-}$ give rise to infinite ``finite-to-finite'' differential relations (indeed one-to-one) between $X_{II}(\alpha)$ and $X_{II}(-\alpha)$, $X_{II}(\alpha+1)$ and $X_{II}(\alpha-1)$ respectively:
\[
\begin{array}{lll}\medskip
R&=&\{(z_1,z_2)\in X_{II}(\alpha)\times X_{II}(-\alpha):z_2=S(z_1)\}\\\medskip
R_+&=&\{(z_1,z_2)\in X_{II}(\alpha)\times X_{II}(\alpha+1):z_2=T_{+}(z_1)\}\\\medskip
R_-&=&\{(z_1,z_2)\in X_{II}(\alpha)\times X_{II}(\alpha-1):z_2=T_{-}(z_1)\}
\end{array}
\]From a model theoretic standpoint this simply mean that for $\alpha\not\in 1/2+\mathbb{Z}$, that is those $\alpha$'s where $X_{II}(\alpha)$ is strongly minimal (see Definition \ref{defnUm} below), $X_{II}(\alpha)$ is nonorthogonal to $X_{II}(-\alpha)$, $X_{II}(\alpha+1)$ and $X_{II}(\alpha-1)$. Nonorthogonality is a fundamental notion in model theory and finding out whether two Painlev\'e equations are nonorthogonal is a very natural problem to tackle. Of course, the existence of the other Backlund transformations implies that there are equations within the other families $P_{III}-P_{VI}$ that are nonorthogonal.

In this paper we show that generic Painlev\'e equations, that is those with parameters in general positions, from different families are orthogonal, answering a question of P. Boalch \cite{private}. In particular, this means that there are no general Backlund transformations between Painlev\'e equations from the different families $P_I-P_{VI}$. 

The paper is organized as follows. We set up our notations and introduce the basic notions from model theory and differential algebra in Section 2. We then, in Section 3, give a survey of the results around the (model theoretic) classification of the Painlev\'e equations as this will be needed in the proofs of our main results. Nonorthogonality in its various form is introduced and studied in Section 4 and finally Section 5 is where our main results (Proposition 5.1-5.6) are proved.


\section{Preliminaries}
Throughout this paper, we fix a saturated model $\mathcal U = ({\mathcal U},+,-,\cdot,0,1,\partial)$ of $DCF_{0}$, the theory of differentially closed fields of characteristic $0$ with a single derivation in the language $L_{\partial}$=($+,-,\cdot,0,1,\partial$) of differential rings. We assume that the cardinality of $\mathcal U$ is the continuum and so we can and will identify the field of constants of $\mathcal U$ with $\mathbb{C}$, the field of complex numbers.  We also fix once and for all an element $t$ of $\mathcal{U}$ having the property that $\partial(t)=1$.

As well known, $DCF_0$ is complete, has quantifier elimination and is $\omega$-stable (cf. \cite{Marker}). Although not explicit, these properties play an important role in this paper. For a field $K$, $K^{alg}$ will denote its algebraic closure in the usual algebraic sense and by a definable set we mean a finite Boolean combination of affine differential algebraic varieties (or Kolchin closed sets). If a definable set $X$ in $\mathcal{U}^n$ is defined with parameters from a differential subfield $K$ of $\mathcal{U}$, we will say that $X$ is defined over $K$ and furthermore if $\overline{y}=(y_1,\ldots,y_n)$ is a tuple from ${\mathcal U}$, $K\langle\overline{y} \rangle$ will denote the differential field $K(\overline{y},\overline{y}',\overline{y}'',\ldots)$, where $\overline{y}'$ is short for $\partial(\overline{y})=(\partial(y_1),\ldots,\partial(y_n))$. Finally, as usual, given a tuple $\overline{z}$ from ${\mathcal U}$, $\overline{z}\in K\langle\overline{y} \rangle^{alg}$ means that the coordinates of $\overline{z}$ are in $K\langle\overline{y} \rangle^{alg}$.  
\begin{defn}\label{defnUm}
An algebraic differential equation of the form $y''= f(y,y',t)$, where $f$ is rational over $\mathbb C$, is said to be {\em irreducible with respect to classical functions} if
\begin{enumerate}
\item for any differential subfield $K$ of $\mathcal U$ which is finitely generated over $\mathbb{C}(t)$, and $y$ solution, either $y\in K^{alg}$ or $tr.deg(K\langle y \rangle/K) = 2$
\item there are no solutions in $\mathbb{C}(t)^{alg}$
\end{enumerate}
\end{defn}
This notion was introduce by Umemura \cite{Umemura1} (see also the Appendix of \cite{NagPil}) following ideas of Painlev\'e in his study of the first Painlev\'e equation. As we will see later, one of the big advances in the Painlev\'e theory is the full classification of all Painlev\'e equations that are irreducible with respect to classical functions.

If we denote by $X\subset\mathcal{U}$ the solution set of $y''= f(y,y',t)$, then condition 1 in Definition \ref{defnUm} is equivalent to $X$ being {\em strongly minimal}; namely, $X$ is infinite and has no infinite co-infinite definable subsets. 
Strongly minimal sets are of fundamental importance in the study of differentially closed fields from the model theoretic standpoint and in \cite{NagPil} one can find a very detailed summary of the main results around the ``geometry" of strongly minimal sets. In particular, in that paper one can find an explanation as to why there essentially are only three kinds of strongly minimal sets. When looking at the generic Painlev\'e equations only geometrically trivial ones are relevant.

\begin{defn} Let $X\subset {\mathcal U}^{n}$ be strongly minimal. We say that $X$ is {\em geometrically trivial} if for any countable differential field $K$ over which $X$ is defined, and for any $y_{1},..,y_{\ell}\in X$, denoting $\tilde{y}_i$ the tuple given by $y_i$ together with all its derivatives, if $(\tilde{y}_1,\ldots,\tilde{y}_{\ell})$ is algebraically dependent over $K$ then for some $i<j$, $\tilde{y}_{i}, \tilde{y}_{j}$ are algebraically dependent over $K$.
\end{defn}
Geometric triviality limits the possible complexity of the structure of a strongly minimal set. However, given a geometrically trivial strongly minimal set, there still is the problem of determining its precise structure.

\begin{defn} Suppose $X\subset {\mathcal U}^{n}$ is a geometrically trivial strongly minimal set defined over some differential field $K$.
\begin{enumerate}
\item $X$ is said to be {\em strictly disintegrated over $K$} if for any $y\in X$,  we have that $X\cap K\gen{y}^{alg}=\{y\}$.
\item $X$ is said to be {\em $\omega$-categorical} if for any $y\in X$,  we have that $X\cap K\gen{y}^{alg}$ is finite.
\end{enumerate}
\end{defn}
So strict disintegratedness (resp. $\omega$-categoricity) means that there is no (resp. very little) structure. It had been long conjectured that all geometrically trivial sets in $DCF_0$ are $\omega$-categorical. However recently, Freitag and Scanlon have found examples of geometrically trivial sets which have rich binary structures (\cite{FreScan}). As we shall now see, the same is not true of the Painlev\'e equations. 

\section{The classification of the Painlev\'e equations}
For each of the six families of Painlev\'e equations we will now give a summary of the various results around the classification of their solution sets. This will play an important role in the proofs of the main results. We direct the reader to the author's thesis \cite{Nag1} for more details about the results below. We will say that an equation in one of the Painlev\'e families is ``generic'' if the corresponding complex parameters are mutually generic, that is if they form an algebraically independent tuple of transcendental complex numbers.

 \subsection{The first Painlev\'e equation $P_{I}$}
\begin{fct}[\cite{Nishioka},\cite{Umemura1}]\label{P1} Let $X_{I}$ be the solution set in $\mathcal{U}$ of $P_{I}$
\[ y''=6y^2+t .\]Then
\begin{enumerate}
\item $X_{I}$ is strongly minimal and has no solution in $\mathbb{C}(t)^{alg}$. In other words, $P_{I}$ is irreducible with respect to classical functions.
\item $X_{I}$ is geometrically trivial and strictly disintegrated over $\mathbb{C}(t)$.
\end{enumerate}
\end{fct}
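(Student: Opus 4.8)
The statement bundles an irreducibility result (part 1) with a substantially finer structural result (part 2), so I would attack them in that order and at increasing depth. For part 1, I would first reduce strong minimality to the dichotomy of condition 1 in Definition \ref{defnUm}: since $P_I$ has order two, every solution $y$ satisfies $tr.deg(K\gen{y}/K)\le 2$ over any $K\supseteq\mathbb{C}(t)$, and strong minimality is precisely the assertion that the intermediate value $1$ never occurs for $y\notin K^{alg}$. So the plan splits into ruling out (a) solutions algebraic over $\mathbb{C}(t)$ (which is condition 2), and (b) solutions $y$ over some finitely generated $K$ with $tr.deg(K\gen{y}/K)=1$.

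For (a), an algebraic solution would satisfy a minimal polynomial relation over $\mathbb{C}(t)$, and I would run a valuation and pole-order analysis at the fixed singularity $t=\infty$ together with the movable double poles that any genuine $P_I$-transcendent must carry; comparing these against the branch/pole structure forced on an algebraic function yields a contradiction. I expect this to be a finite if slightly delicate bookkeeping, and it is the easy half.

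The real obstacle is (b). Here $y'\in K(y)^{alg}$, so $K\gen{y}$ has transcendence degree one over $K$ and $y$ satisfies a first-order algebraic ODE in addition to $P_I$; geometrically, the $P_I$-foliation (the vector field $\partial_t+y'\partial_y+(6y^2+t)\partial_{y'}$ on the first-jet space) would admit an invariant algebraic curve over $K$. The content of the Nishioka--Umemura theorem is exactly that no such reduction exists, i.e. $P_I$ is irreducible with respect to classical functions, and I would import it in that form. A self-contained argument would proceed through the differential-Galois / valuation-theoretic analysis of the putative first-order quotient, showing its compatibility with $y''=6y^2+t$ over $K$ is impossible; this is the hard kernel, and I would cite rather than reprove it.

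Finally, for part 2, with strong minimality in hand I would invoke the Zilber trichotomy for $DCF_0$: a strongly minimal set is geometrically trivial, or nonorthogonal to the constant field $\mathbb{C}$, or nonorthogonal to a modular group. Because $t$ occurs essentially and there are no solutions in $\mathbb{C}(t)^{alg}$, $X_I$ is not internal to $\mathbb{C}$, disposing of the second case; ruling out the modular case then forces geometric triviality. Upgrading this to strict disintegratedness, namely $X_I\cap\mathbb{C}(t)\gen{y}^{alg}=\{y\}$, is the subtle last step: it says no solution $z\neq y$ lies in $\mathbb{C}(t)\gen{y}^{alg}$, so that there is no nontrivial finite-to-finite $\mathbb{C}(t)$-definable correspondence of $X_I$ with itself. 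Such a correspondence would amount to an algebraic Bäcklund-type symmetry, which $P_I$, being parameter-free and hence rigid, should not possess; I would make this precise by analysing the automorphisms of $\mathbb{C}(t)\gen{y}^{alg}$ fixing $X_I$ and showing any interalgebraic second solution produces exactly the forbidden correspondence. This rigidity argument is where I expect the most care to be needed.
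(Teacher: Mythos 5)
First, note that the paper offers no proof of this statement at all: it is recorded as a Fact imported from \cite{Nishioka} and \cite{Umemura1} (with pointers to \cite{Nag1} and \cite{NagPil}), so the only fair comparison is between your sketch and the arguments in those sources. For part 1 your plan is consistent with that: the exclusion of solutions in $\mathbb{C}(t)^{alg}$ is indeed an elementary Puiseux/pole-order computation, and the exclusion of order-one differential subvarieties (your case (b)) is exactly the hard irreducibility theorem of Umemura, which you rightly propose to cite rather than reprove. No objection there.

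Part 2 is where there is a genuine gap. The trichotomy route you propose requires you to actually rule out nonorthogonality to the constants and to Manin kernels, and the reason you give for the first (``$t$ occurs essentially and there are no solutions in $\mathbb{C}(t)^{alg}$'') is not an argument: non-internality to $\mathbb{C}$ must be established over arbitrary parameter extensions, typically by computing the field of constants of $K\gen{y}$ for $y$ generic, and the modular case is not addressed at all. More seriously, your final step --- upgrading triviality to strict disintegratedness by appealing to $P_I$ being ``parameter-free and hence rigid'' --- is a heuristic, not a proof: ruling out the known B\"acklund symmetries says nothing about arbitrary finite-to-finite $\mathbb{C}(t)$-definable correspondences, and the Freitag--Scanlon examples recalled in Section 2 of this paper show precisely that geometrically trivial strongly minimal sets can carry rich binary structure, so no soft rigidity principle can close this step. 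The actual content behind part 2 is Nishioka's theorem \cite{Nishioka}: for any $n$ and any distinct $y_1,\dots,y_n\in X_I$, the $2n$ elements $y_1,y_1',\dots,y_n,y_n'$ are algebraically independent over $\mathbb{C}(t)$. This is proved by a direct differential-algebraic (valuation-theoretic) analysis, it bypasses the trichotomy entirely, and it yields geometric triviality and strict disintegratedness in one stroke. Your outline is missing exactly this idea, which is the substantive kernel of part 2.
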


\subsection{The second Painlev\'e equation $P_{II}$}

\begin{fct}[\cite{Murata},\cite{NagPil},\cite{NagPil1},\cite{Nag},\cite{Umemura2}]\label{P2} Let $\alpha\in\mathbb{C}$ and let $X_{II}(\alpha)$ be the solution set in $\mathcal{U}$ of $P_{II}(\alpha)$
\[ y''=2y^3+ty+\alpha.\]Then
\begin{enumerate}
\item $X_{II}(\alpha)$ is strongly minimal if and only if $\alpha\not\in 1/2+\mathbb{Z}$.
\item $P_{II}(\alpha)$ has a solution in $\mathbb{C}(t)^{alg}$ if and only if $\alpha\in\mathbb{Z}$. Furthermore, the solution when it exists is unique.
\item For any $\alpha\not\in 1/2+\mathbb{Z}$, $X_{II}(\alpha)$ is geometrically trivial.
\item For generic $\alpha$ (i.e. $\alpha\not\in\mathbb{Q}^{alg}$), $X_{II}(\alpha)$ is strictly disintegrated over $\mathbb{C}(t)$.
\end{enumerate}
\end{fct}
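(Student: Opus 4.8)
The statement collects several results, so the plan is to treat its four parts separately, combining classical Painlev\'e theory with the model-theoretic structure theory of strongly minimal sets in $DCF_0$. For part (1), the direction ``$\alpha\in 1/2+\mathbb{Z}\Rightarrow$ not strongly minimal'' I would establish explicitly via Riccati solutions: every solution of $y'=y^2+t/2$ also solves $P_{II}(1/2)$, since differentiating gives $y''=2yy'+1/2=2y^3+ty+1/2$. The solution set of this Riccati equation is an infinite, co-infinite definable subset of $X_{II}(1/2)$ (via $y=-u'/u$ its solutions are parametrized by $\mathbb{P}^1$ over the constants, whereas generic solutions of $P_{II}(1/2)$ have transcendence degree $2$), so $X_{II}(1/2)$ is not strongly minimal; transporting this family along the B\"acklund transformations $T_\pm$, which shift $\alpha$ by $\pm1$, disposes of every $\alpha\in 1/2+\mathbb{Z}$. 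For the converse I would use the equivalence recorded after Definition~\ref{defnUm} — strong minimality of $X_{II}(\alpha)$ is the same as irreducibility of $P_{II}(\alpha)$ with respect to classical functions — and invoke the classical irreducibility theorem of Umemura and Watanabe (\cite{Umemura2}), which holds precisely when $\alpha\notin 1/2+\mathbb{Z}$. This converse is the first genuinely hard input, resting on the full Umemura irreducibility machinery.

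For part (2), I would produce the rational solutions by a seed-and-B\"acklund argument: $y\equiv 0$ solves $P_{II}(0)$, and iterating the integer-shifting maps $T_\pm$ yields a solution in $\mathbb{C}(t)$ for every $\alpha\in\mathbb{Z}$; uniqueness, together with the absence of any solution in $\mathbb{C}(t)^{alg}$ when $\alpha\notin\mathbb{Z}$, is the classical pole-counting analysis of Murata \cite{Murata}, which I would cite. For part (3), given strong minimality, geometric triviality follows from the Zilber trichotomy for $DCF_0$ (Hrushovski--Sokolovi\'c): a strongly minimal set is geometrically trivial unless it is nonorthogonal either to the field of constants $\mathbb{C}$ or to a modular group coming from an abelian variety over $\mathbb{C}$. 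Nonorthogonality to $\mathbb{C}$ would contradict irreducibility with respect to classical functions, since it forces a generic solution into a field built from constants and algebraic functions, lowering $tr.deg(K\langle y\rangle/K)$ below $2$; the group-like case is excluded because $P_{II}(\alpha)$ does not arise from the Manin kernel of an abelian variety. Both exclusions are carried out in \cite{NagPil},\cite{Nag}, which I would follow.

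Part (4) is the main obstacle. Suppose $y_1,y_2\in X_{II}(\alpha)$ with $y_2\in\mathbb{C}(t)\gen{y_1}^{alg}$ and $y_2\neq y_1$. By strong minimality and geometric triviality, $y_1$ and $y_2$ are then interalgebraic over $\mathbb{C}(t)$, so their locus is a generically finite-to-finite self-correspondence of $X_{II}(\alpha)$. The heart of the matter is the classification, carried out in \cite{NagPil1},\cite{Nag}, showing that every such algebraic correspondence between Painlev\'e solution sets is built from the explicit B\"acklund transformations $S,T_\pm$. Since each of these changes the parameter (to $-\alpha$ or $\alpha\pm1$), no nontrivial self-correspondence can fix a generic $\alpha\notin\mathbb{Q}^{alg}$; hence $y_2=y_1$ and $X_{II}(\alpha)$ is strictly disintegrated over $\mathbb{C}(t)$. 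Establishing that the B\"acklund transformations exhaust all algebraic correspondences is precisely the nonorthogonality analysis that the present paper develops, and it is where the cited work does the real labor.
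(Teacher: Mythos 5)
You should first be aware that the paper offers no proof of Fact \ref{P2}: it is stated as a survey result with citations to \cite{Murata}, \cite{NagPil}, \cite{NagPil1}, \cite{Nag}, \cite{Umemura2}, and the reader is pointed to the author's thesis for details, so your proposal can only be measured against the cited literature. For parts (1)--(3) your roadmap is essentially the right one: the Riccati computation $y'=y^2+t/2\Rightarrow y''=2y^3+ty+1/2$ plus the Backlund shifts $T_{\pm}$ give the failure of strong minimality on $1/2+\mathbb{Z}$; the converse is the Umemura--Watanabe irreducibility theorem; (2) is Murata's classification seeded by $y\equiv 0$; and (3) is the Hrushovski--Sokolovi\'c trichotomy, with nonorthogonality to the constants excluded by comparing orders (a finite-to-finite correspondence preserves transcendence degree, and $X_{II}(\alpha)$ has order $2$ while the constants have order $1$) and the Manin-kernel case excluded in \cite{NagPil},\cite{Nag}. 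Your one-line dismissal of the group case (``$P_{II}(\alpha)$ does not arise from the Manin kernel of an abelian variety'') understates what must actually be proved there --- namely that $X_{II}(\alpha)$ is not even \emph{nonorthogonal} to any Manin kernel --- but since you defer to the correct sources this is a presentational rather than a logical defect.

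The genuine gap is in part (4). You invoke ``the classification, carried out in \cite{NagPil1},\cite{Nag}, showing that every such algebraic correspondence between Painlev\'e solution sets is built from the explicit Backlund transformations $S,T_{\pm}$.'' No such classification is proved in those papers, and it cannot serve as an input here: classifying all infinite finite-to-finite correspondences on $X_{II}(\alpha)$ is essentially equivalent to (indeed stronger than) strict disintegratedness, so your argument is circular as written. What \cite{NagPil1} actually does runs in the opposite direction: assuming a nontrivial correspondence $R\subseteq X_{II}(\alpha)\times X_{II}(\alpha)$ exists, geometric triviality together with Fact \ref{weaklyort} forces $R$ to be definable over $\mathbb{Q}(\alpha,t)^{alg}$; one then regards $\alpha$ as a parameter ranging over the strongly minimal set $\mathbb{C}$, so the defining formula must hold for all but finitely many complex values, and one specializes $\alpha$ into the exceptional sets ($\mathbb{Z}$, where there is a unique algebraic solution, or $1/2+\mathbb{Z}$, where strong minimality fails) to reach a contradiction --- precisely the specialization technique that Section 5 of the present paper deploys between different families. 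Your part (4) needs to be rewritten along these lines; as it stands it rests on a nonexistent theorem.
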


\subsection{The third Painlev\'e equation $P_{III}$}
For $P_{III}$, Okamoto \cite{Okam4} (see also \cite{Umemura3}) shows that when working with generic parameters, it is enough to rewrite the equation as a 2-parameter family. 
\begin{fct}[\cite{Murata2},\cite{NagPil},\cite{NagPil1},\cite{Umemura3}]\label{P3} Let $v_{1},v_{2}\in\mathbb{C}$ and let $X_{III}(v_1,v_2)$ be the solution sets in $\mathcal{U}$ of $P_{III}(v_1,v_2)$ 
\[
y''=\frac{1}{y}(y')^2-\frac{1}{t}y'+\frac{4}{t}(v_1+1-v_2 y^2)+4y^3-\frac{4}{y}.
\]
Then
\begin{enumerate}
\item $X_{III}({v_{1},v_{2}})$ is strongly minimal if and only if $v_{1}+ v_{2}\notin 2{\mathbb Z}$ and $v_{1}-v_{2}\notin 2{\mathbb Z}$. 
\item$P_{III}({v_1,v_2})$ has algebraic solutions if and only if there exists an integer $n$ such that $v_2-v_1-1=2n$ or $v_2+v_1+1=2n$.
\item If $P_{III}({v_1,v_2})$ has algebraic solutions, then the number of algebraic solutions is two or four. $P_{III}({v_1,v_2})$ has four algebraic solutions if and only if there exist two integers $n$ and $m$ such that $v_2-v_1-1=2n$ and $v_2+v_1+1=2m$.
\item If $v_1,v_2$ are mutually generic, then $X_{III}(v_1,v_2)$ is geometrically trivial and $\omega$-categorical.
\end{enumerate}
\end{fct}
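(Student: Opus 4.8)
Because this statement is a synthesis of the cited literature, the plan is to assemble the analytic classification of the solutions with the model-theoretic dictionary, organizing everything around the affine Weyl group of Backlund transformations attached to $P_{III}$, in the same spirit that the $\tilde{A}_1$ picture governs $P_{II}$ in Fact \ref{P2}.

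For item 1, I would use the remark following Definition \ref{defnUm} that strong minimality of $X_{III}(v_1,v_2)$ is equivalent to condition 1 there, i.e. to irreducibility in Umemura's sense, and split into the two directions. For necessity, when $v_1+v_2\in 2\mathbb{Z}$ or $v_1-v_2\in 2\mathbb{Z}$, I would exhibit a one-parameter family of Riccati solutions: on these parameter walls the Hamiltonian system associated to $P_{III}$ factors through a first-order equation $y'=a(t)y^2+b(t)y+c(t)$, whose solution set is an infinite, co-infinite definable subset of $X_{III}(v_1,v_2)$, so strong minimality fails. For sufficiency, when $v_1\pm v_2\notin 2\mathbb{Z}$, I would invoke the Umemura--Watanabe irreducibility analysis recorded in \cite{Umemura3}: since the equation has order two, condition 1 of Definition \ref{defnUm} can only fail through a solution $y$ with $tr.deg(K\gen{y}/K)\le 1$, which forces either a solution algebraic over $\mathbb{C}(t)$ or a Riccati reduction, and both are shown to occur only on the above walls.

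Items 2 and 3 are the content of Murata's classification \cite{Murata2}. The Backlund transformations generate an affine Weyl group acting on the parameter space $(v_1,v_2)$, and the algebraic solutions arise precisely as the orbit of a short explicit list of rational seed solutions. Tracking the seed conditions through this action yields the linear conditions $v_2-v_1-1=2n$ and $v_2+v_1+1=2n$, and the count of two versus four solutions records whether one or both families of walls pass through $(v_1,v_2)$, each wall contributing its own seed orbit. Finally, for item 4 and mutually generic $v_1,v_2$, the parameters avoid all walls, so item 1 gives strong minimality and item 2 gives no algebraic solutions; geometric triviality then follows from the trichotomy for strongly minimal sets in $DCF_0$ as in \cite{NagPil}, where the genuine $t$-dependence excludes the locally modular (Manin-kernel) case and the absence of algebraic solutions together with a specialization argument on the generic parameters excludes non-orthogonality to $\mathbb{C}$. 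For $\omega$-categoricity one checks that $X_{III}(v_1,v_2)\cap\mathbb{C}(t)\gen{y}^{alg}$ is finite for each $y$, its elements corresponding to the finitely many Backlund self-transforms fixing a generic $(v_1,v_2)$.

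I expect the sufficiency direction of item 1 --- the Umemura--Watanabe irreducibility argument for the Hamiltonian system off the walls --- to be the main obstacle, since it must rule out every intermediate differential-field reduction and not merely the evident Riccati ones; the remaining items then follow comparatively formally once strong minimality and the algebraic-solution count are in hand.
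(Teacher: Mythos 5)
First, a point of comparison: the paper gives no proof of Fact \ref{P3} at all. It is stated as a Fact imported from \cite{Murata2}, \cite{Umemura3}, \cite{NagPil} and \cite{NagPil1}, with the reader referred to the author's thesis for details, so your proposal can only be measured against the cited literature. At that level its architecture is broadly right: item 1 via Riccati loci (necessity) and the Umemura--Watanabe irreducibility analysis (sufficiency), items 2--3 via Murata's Weyl-group-orbit classification of algebraic solutions, and item 4 as the Nagloo--Pillay contribution. You are also right that the sufficiency half of item 1 is where the real analytic work lives, since condition 1 of Definition \ref{defnUm} quantifies over all finitely generated extensions of $\mathbb{C}(t)$ and so one must exclude order-one subvarieties over arbitrary parameter fields, not just over $\mathbb{C}(t)$.

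The step that is genuinely wrong as written is your derivation of geometric triviality in item 4. Neither of your two exclusions is a valid argument: ``genuine $t$-dependence'' does not rule out nonorthogonality to a Manin kernel (the Manin kernel of $E_t$ discussed in Section 3.6 is itself defined over $\mathbb{C}(t)$ and genuinely $t$-dependent), and absence of algebraic solutions does not rule out nonorthogonality to the constants (the solution set of $y'=y$ has no nonzero solutions in $\mathbb{C}(t)^{alg}$, yet the ratio of any two nonzero solutions is a constant, so it is nonorthogonal to $\mathbb{C}$). The argument actually used in \cite{NagPil} is simpler and purely numerical: once $X_{III}(v_1,v_2)$ is strongly minimal, a generic solution has transcendence degree $2$ over the base, whereas the constants and the Manin kernels are strongly minimal of order $1$; a finite-to-finite correspondence between strongly minimal sets preserves this transcendence degree over the enlarged parameter set, so an order-two strongly minimal set is orthogonal to every order-one one, and the trichotomy for strongly minimal sets in $DCF_0$ recalled in \cite{NagPil} then forces geometric triviality. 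Relatedly, your $\omega$-categoricity step is thinner than the real proof: one must bound all of $X_{III}(v_1,v_2)\cap\mathbb{C}(t)\gen{y}^{alg}$, not merely the orbit of $y$ under known Backlund self-maps, and \cite{NagPil1} does this by a specialization argument on the generic parameters played off against the algebraic-solution counts of \cite{Murata2}, yielding the bound of $2$ recorded in Remark \ref{RP3}.
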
 
The proof of Fact \ref{P3}(4) given in \cite{NagPil1} tells us a little more:
\begin{rem}\label{RP3}
Let $v_1,v_2$ be generic and let $y\in X_{III}(v_1,v_2)$. Then the cardinality of $X_{III}(v_1,v_2)\cap \mathbb{C}(t)\gen{y}^{alg}$ is at most 2  (including $y$ itself).
\end{rem}

\subsection{The fourth Painlev\'e equation $P_{IV}$}

\begin{fct}[\cite{Murata},\cite{NagPil},\cite{NagPil1},\cite{Umemura2}]\label{P4}  Let $\alpha,\beta\in\mathbb{C}$ and let $X_{IV}(\alpha,\beta)$ be the solution set in $\mathcal{U}$ of $P_{IV}(\alpha,\beta)$
\[y''=\frac{1}{2y}(y')^2+\frac{3}{2}y^3+4ty^2+2(t^2-\alpha)y+\frac{\beta}{y}.\] Then
\begin{enumerate}
\item $X_{IV}(\alpha,\beta)$ is strongly minimal if and only if $v_1-v_2\not\in\mathbb{Z}$ or $v_2-v_3\not\in\mathbb{Z}$ or $v_3-v_1\not\in\mathbb{Z}$, for any $v_1,v_2,v_3\in\mathbb{C}$ with $v_1+v_2+v_3=0$ and such that $\alpha = 3v_{3} + 1$ and $\beta = -2(v_{2}-v_{1})^{2}$.
\item $P_{IV}$ has algebraic solutions if and only if $\alpha,\beta$ satisfy one of the following conditions:
\begin{enumerate}[(i)]
\item $\alpha=n_1$ and $\beta=-2(1+2n_2-n_1)^2$, where $n_1,n_2\in\mathbb{Z}$;
\item $\alpha=n_1$, $\beta=-\frac{2}{9}(6n_2-3n_1+1)^2$, where $n_1,n_2\in\mathbb{Z}$.
\end{enumerate} Furthermore the algebraic solutions for these parameters are unique.
\item If $\alpha,\beta$ are mutually generic, then $X_{IV}(\alpha,\beta)$ is geometrically trivial and strictly disintegrated over $\mathbb{C}(t)$.
\end{enumerate}
\end{fct}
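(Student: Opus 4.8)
The plan is to treat the three parts with different tools. For~(1) I would use the equivalence, recorded after Definition~\ref{defnUm}, between strong minimality of $X_{IV}(\alpha,\beta)$ and irreducibility with respect to classical functions in the sense of condition~(1) of that definition. Irreducibility can fail only through a transcendental order-one solution, that is, a one-parameter family of Riccati solutions $y'=a(t)y^2+b(t)y+c(t)$ that also solve $P_{IV}$. Substituting this ansatz into $P_{IV}$ and matching coefficients forces $a=\pm1$, $b=2at$, $c=-2a\alpha-2$ and the single relation $\beta=-2(a\alpha+1)^2$; these are the basic reflection walls. Propagating them through the B\"acklund (affine Weyl group $\tilde{A}_2$) action on the symmetric parameters $(v_1,v_2,v_3)$, normalised by $v_1+v_2+v_3=0$, $\alpha=3v_3+1$, $\beta=-2(v_2-v_1)^2$, shows that the reducibility locus is exactly the union of the hyperplanes $v_i-v_j\in\mathbb{Z}$. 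Off every such wall $P_{IV}$ is irreducible and hence $X_{IV}(\alpha,\beta)$ is strongly minimal, which is~(1).

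For~(2) I would first use the rigidity that every algebraic solution of $P_{IV}$ over $\mathbb{C}(t)$ is in fact rational, which follows from the local analysis of its singularities afforded by the Painlev\'e property. The rational solutions are then organised by the seed-and-B\"acklund method: each lies in the orbit, under the extended affine Weyl symmetry group, of one of finitely many explicit rational seeds (the generalised Hermite and Okamoto hierarchies, built from seeds such as $y=-2t$ and $y=-\frac{2}{3}t$), and computing the orbits of the corresponding parameter points yields precisely the two families (i) and (ii). Uniqueness at each admissible $(\alpha,\beta)$ follows from the rigid determinantal (Hermite/Okamoto polynomial) form of these solutions, equivalently from the fact that no parameter in the list is met by two distinct solutions along a B\"acklund orbit.

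Part~(3) is the model-theoretic heart. For generic $(\alpha,\beta)$ the parameters lie off all walls, so $X_{IV}(\alpha,\beta)$ is strongly minimal by~(1). Geometric triviality I would deduce from the Zilber trichotomy for $DCF_0$ (Hrushovski--Sokolovi\'c): a strongly minimal set is geometrically trivial unless it is nonorthogonal either to the constants $\mathbb{C}$ or to a Manin kernel. Nonorthogonality to $\mathbb{C}$ is impossible, since it would yield a finite-to-finite $\mathbb{C}(t)$-correspondence between $X_{IV}(\alpha,\beta)$ and a curve over the constants, forcing a solution $y$ whose field $\mathbb{C}(t)\langle y\rangle$ carries new data algebraic over $\mathbb{C}$, i.e. a classical solution, contradicting irreducibility. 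Nonorthogonality to a Manin kernel is ruled out by the same style of argument together with the genuinely $t$-dependent, order-two nature of the equation. This leaves geometric triviality.

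Strict disintegratedness for generic $(\alpha,\beta)$ is the delicate point and the step I expect to be the main obstacle. Suppose $y_1\neq y_2\in X_{IV}(\alpha,\beta)$ with $y_2\in\mathbb{C}(t)\langle y_1\rangle^{alg}$; by strong minimality and geometric triviality this is exactly a nontrivial finite-to-finite $\mathbb{C}(t)$-definable self-correspondence of $X_{IV}(\alpha,\beta)$. The crux is to show that every such correspondence is induced by a B\"acklund transformation, i.e. by an element of the extended affine Weyl symmetry group of $P_{IV}$ --- equivalently, that there are no algebraic relations among solutions beyond the classical ones. Granting this, a B\"acklund transformation is a self-correspondence only if it fixes the point $(\alpha,\beta)$; but each nontrivial element of this countable group fixes only a proper algebraic subvariety of parameter space, and a generic, i.e. algebraically independent transcendental, pair $(\alpha,\beta)$ avoids all of them. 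Hence the only self-correspondence is the identity, forcing $y_2=y_1$ and giving strict disintegratedness. The hard part is therefore the classification of self-correspondences: excluding exotic algebraic dependencies requires the fine analysis of rational maps intertwining the Painlev\'e vector field, for which I would rely on the Umemura--Watanabe description of the full symmetry group of $P_{IV}$ together with the interalgebraicity machinery for geometrically trivial sets developed in \cite{NagPil1}.
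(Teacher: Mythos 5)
A preliminary point: the paper offers no proof of Fact \ref{P4}. It is stated as a survey item, attributed to \cite{Murata}, \cite{Umemura2}, \cite{NagPil} and \cite{NagPil1}, with the reader sent to those sources and to \cite{Nag1} for details. So your proposal can only be measured against the cited literature, not against an argument in the text. Against that benchmark, your sketches of (1) and (2) follow the standard route: (1) is the Umemura--Watanabe classification of Riccati-type classical solutions (keeping in mind that the reduction of any failure of condition (1) of Definition \ref{defnUm} to a Riccati subvariety is itself the substantive content of \cite{Umemura2}, not a formality), and (2) is Murata's seed-and-B\"acklund classification of rational solutions. One small discrepancy: you conclude that reducibility holds on the \emph{union} of the walls $v_i-v_j\in\mathbb{Z}$, whereas the statement, read literally, places it on their intersection; the union is the standard Umemura--Watanabe locus, so you are matching the intended statement rather than its wording.

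The genuine gap is in your treatment of strict disintegratedness in (3). You reduce it to the claim that every infinite finite-to-finite $\mathbb{C}(t)$-definable self-correspondence of $X_{IV}(\alpha,\beta)$ is induced by a B\"acklund transformation, and you flag this yourself as the unresolved crux. That classification is not available in the literature and is not how \cite{NagPil} argues; as it stands, your proof of the key step is a statement of what would suffice. The actual argument sidesteps the classification of correspondences entirely. Suppose $y_1\neq y_2\in X_{IV}(\alpha,\beta)$ with $y_2\in\mathbb{C}(t)\langle y_1\rangle^{alg}$. By geometric triviality and Fact \ref{weaklyort} the locus $R$ of $(y_1,y_2)$ may be taken to be defined over $\mathbb{Q}(\alpha,\beta,t)^{alg}$; it is infinite, finite-to-finite, disjoint from the diagonal, and (since generic parameters admit no algebraic solutions) its projections are surjective. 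Writing all of this as a formula $\sigma(u_1,u_2,t)$ with $\mathcal{U}\models\sigma(\alpha,\beta,t)$ and using strong minimality of the field of constants, one specializes $(\alpha,\beta)$ to a pair from the list in part (2) at which $X_{IV}$ is still strongly minimal and has a \emph{unique} algebraic solution $y_0$. The specialized relation must then pair $y_0$ with some $z\neq y_0$ lying in $\mathbb{C}(t)\langle y_0\rangle^{alg}=\mathbb{C}(t)^{alg}$, i.e.\ with a second algebraic solution --- a contradiction. This is precisely the specialization device of Section 5 of the present paper (Propositions \ref{Ort1} and \ref{algebraicproof1}), and it is why parts (1) and (2) are recorded alongside (3): they are the inputs to that argument. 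Your symmetry-group route would, if completed, prove something strictly stronger, but as written the decisive step is assumed rather than proved.
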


\subsection{The fifth Painlev\'e equation $P_{V}$}

The situation for $P_{V}$ is similar to the case of $P_{III}$ above, in that, in the light of certain transformations preserving the equation and by fixing $\delta=-1/2$, $P_{V}$ can be written as a  $3$-parameter family. The analysis is carried out by Okamoto\cite{Okam2} and then Watanabe \cite{Watanabe}

\begin{fct}[\cite{Kitaev},\cite{NagPil},\cite{NagPil1},\cite{Watanabe}]\label{P5}  Let $\alpha,\beta,\gamma\in\mathbb{C}$ and let $X_{V}(\alpha,\beta,\gamma)$ be the solution set in $\mathcal{U}$ of $P_{V}(\alpha,\beta,\gamma,-1/2)$
\[y''=\left(\frac{1}{2y}+\frac{1}{y-1}\right)( y')^2-\frac{1}{t}y'+\frac{(y-1)^2}{t^2}\left(\alpha y+\frac{\beta}{y}\right)+\gamma\frac{y}{t}-\frac{1}{2}\frac{y(y+1)}{y-1},\] Then
\begin{enumerate}
\item $X_{V}(\alpha,\beta,\gamma)$ is strongly minimal if and only if $v_i-v_j\not\in\mathbb{Z}$ ($i\neq j$), for any $v_1,v_2,v_3,v_4\in\mathbb{C}$ with $v_1+v_2+v_3+v_4=0$ and such that $\alpha = \frac{1}{2}(v_3-v_4)^2$, $\beta =-\frac{1}{2}(v_2-v_1)^2$ and $\gamma = 2v_1+2v_2-1$.
\item  Then $P_{V}(\alpha,\beta,\gamma,-1/2)$ has an algebraic solution if and only if one of the following holds with $m,n\in\mathbb{Z}$:
\begin{enumerate}[(i)]
\item $\alpha=\frac{1}{2}(m+\gamma)^2$ and $\beta=-\frac{1}{2}n^2$ where $n>0$, $m+n$ is odd, and $\alpha\neq 0$ when $|m|<n$;
\item $\alpha=\frac{1}{2}n^2$ and $\beta=-\frac{1}{2}(m+\gamma)^2$ where $n>0$, $m+n$ is odd, and $\beta\neq 0$ when $|m|<n$;
\item $\alpha=\frac{1}{2}a^2$, $\beta=-\frac{1}{2}(a+n)^2$ and $\gamma=m$, where $m+n$ is even and $a$ arbitrary;
\item $\alpha=\frac{1}{8}(2m+1)^2$, $\beta=-\frac{1}{8}(2n+1)^2$ and $\gamma\not\in\mathbb{Z}$.
\end{enumerate}
\item If $\alpha,\beta,\gamma$ are mutually generic, then $X_{V}(\alpha,\beta,\gamma)$ is geometrically trivial and strictly disintegrated over $\mathbb{C}(t)$.
\end{enumerate}
\end{fct}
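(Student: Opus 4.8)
The plan is to treat the three parts in turn, following the template that already governs the other families (Facts \ref{P1}--\ref{P4}), and to reduce each assertion to a classical analytic/Galois-theoretic input combined with the model-theoretic trichotomy in $DCF_0$. Throughout I would work in the symmetric (Hamiltonian) presentation of $P_V(\alpha,\beta,\gamma,-1/2)$ carrying the action of the affine Weyl group of type $A_3^{(1)}$, since the parameters $v_1,v_2,v_3,v_4$ with $\sum v_i=0$ are exactly the affine root coordinates; the conditions in (1) and (2) then read off as reflection-hyperplane conditions, the roots being the differences $v_i-v_j$.

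For part (1), recall from the discussion after Definition \ref{defnUm} that strong minimality of $X_V(\alpha,\beta,\gamma)$ is equivalent to condition 1 there, i.e. the order-two dichotomy ``$y\in K^{alg}$ or $tr.deg(K\langle y\rangle/K)=2$''. Hence strong minimality fails precisely when some solution has $tr.deg(K\langle y\rangle/K)=1$ over a finitely generated $K\supseteq\mathbb{C}(t)$, that is, when the equation admits a one-parameter Riccati family. First I would show that when $v_i-v_j\in\mathbb{Z}$ for some $i\neq j$ one can, after applying an element of the Bäcklund group to reduce to the hyperplane $v_i=v_j$, exhibit such a Riccati family; this yields an infinite, co-infinite definable subset and kills strong minimality. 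Conversely, when $v_i-v_j\notin\mathbb{Z}$ for all $i\neq j$, I would invoke the Umemura--Watanabe irreducibility method (differential Galois theory of the associated isomonodromic linear system) to rule out any reduction to a lower-order or classical equation, which gives condition 1 and hence strong minimality. Part (2) is essentially the classical classification of algebraic solutions due to Watanabe and Kitaev, which I would reprove by the seed-plus-orbit method: locate the finitely many rational seed solutions forced by the Painlev\'e analysis at the most degenerate parameters, close up under the $A_3^{(1)}$ action, match the resulting parameter loci to cases (i)--(iv), and read off the counting and uniqueness statements from the stabiliser computation in the Weyl group.

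The heart of the matter, and the part relevant to the rest of the paper, is (3). For geometric triviality I would appeal to the Hrushovski--Sokolovic trichotomy: a non-geometrically-trivial strongly minimal set in $DCF_0$ is nonorthogonal to the field of constants $\mathbb{C}$ or to the Manin kernel of a simple abelian variety. Nonorthogonality to $\mathbb{C}$ is excluded because it would produce a finite-to-finite correspondence between a generic solution and a constant tuple, hence a classical solution family, contradicting the irreducibility from (1); nonorthogonality to a Manin kernel is excluded by the argument of \cite{NagPil} that the order-two Painlev\'e system cannot support such locally modular structure. This leaves geometric triviality. For strict disintegratedness over $\mathbb{C}(t)$ I must show that if $y,z\in X_V(\alpha,\beta,\gamma)$ are interalgebraic over $\mathbb{C}(t)$ then $z=y$. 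Since the set has order two, interalgebraicity gives $z\in\mathbb{C}(t,y,y')^{alg}$, and a local (Painlev\'e/valuation-theoretic) analysis of the admissible pole structure forces $z$ to be given by a rational transformation $z=R(y,y',t)$ carrying solutions of $P_V(\alpha,\beta,\gamma,-1/2)$ to solutions of some $P_V$. I would then show every such transformation is induced by an element of the Bäcklund group and so shifts the parameter tuple by a nonzero lattice element; for mutually generic $\alpha,\beta,\gamma$ no nontrivial group element fixes the parameters, forcing $z=y$.

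The main obstacle is this last step: proving that \emph{every} finite-to-finite self-correspondence of $X_V$ over $\mathbb{C}(t)$ arises from a known Bäcklund transformation rather than from some a priori accidental algebraic relation. This demands a complete classification of the rational transformations $z=R(y,y',t)$ compatible with the equation, where the detailed singularity analysis of $P_V$ must be combined with the completeness of the $A_3^{(1)}$ symmetry; it is exactly the genericity hypothesis that then guarantees the parameter-shift is nonzero and closes the argument, in contrast to the $P_{III}$ situation of Remark \ref{RP3} where nongeneric coincidences yield orbits of size two and only $\omega$-categoricity survives.
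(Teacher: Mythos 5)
First, a point of comparison that matters here: the paper itself offers no proof of this statement. It is recorded as a Fact, imported from \cite{Watanabe} and \cite{Kitaev} (parts (1) and (2)) and from \cite{NagPil}, \cite{NagPil1} (part (3)), with the reader directed to \cite{Nag1} for details. So your outline has to be measured against the cited literature rather than against an in-paper argument. For parts (1) and (2), and for geometric triviality in part (3), your sketch tracks those sources reasonably well: the Riccati loci on the reflection hyperplanes of $A_3^{(1)}$, the Umemura--Watanabe irreducibility analysis, the seed-plus-orbit classification of algebraic solutions, and the Hrushovski--Sokolovi\'c trichotomy with the constants and Manin-kernel cases excluded essentially as in \cite{NagPil}.

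The genuine gap is in your route to strict disintegratedness. You reduce it to classifying all algebraic self-correspondences of $X_V$ over $\mathbb{C}(t)$ and showing each is induced by a B\"acklund transformation; you flag this classification yourself as ``the main obstacle,'' and rightly so: no such classification exists, and interalgebraicity over $\mathbb{C}(t)$ does not even hand you a rational transformation $z=R(y,y',t)$ to analyse --- only an algebraic relation over $\mathbb{C}(t,y,y')$, so the ``admissible pole structure'' step already fails as stated. The argument actually used in \cite{NagPil1} (and mirrored throughout Section 5 of the present paper) sidesteps this entirely: if two distinct solutions were interalgebraic over $\mathbb{C}(t)$, then by triviality and $\omega$-stability the witnessing relation is definable over $\mathbb{Q}(\alpha,\beta,\gamma,t)^{alg}$, and since $\mathbb{C}$ is strongly minimal and the parameters are mutually generic one may specialise them into the exceptional loci described in parts (1)--(2), where either strong minimality fails (an order-one Riccati subvariety appears) or the count of algebraic solutions contradicts the existence of the correspondence. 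That specialisation trick is what the genericity hypothesis is really for; without it, your approach requires solving a hard and open classification problem about self-correspondences of $P_V$ rather than proving the stated fact.
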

\begin{rmk}\label{RP5}{\color{white}kj}
\begin{enumerate}[(i)]
\item In case 2(iv) the algebraic solution is unique. This is also true for most of the other cases (see \cite{Kitaev}) except for:
\item In case 2(i) or (ii), if $\gamma\in\mathbb{Z}$ then there are at most two algebraic solutions. Specifically, if $\alpha\beta\neq 0$, there are exactly two; otherwise there is only one.
\end{enumerate}
\end{rmk}

\subsection{The sixth Painlev\'e equation $P_{VI}$}

When looking at $P_{VI}$, it is more convenient to work with its hamiltonian form.
\begin{fct}[\cite{Lisovyy},\cite{NagPil},\cite{NagPil1},\cite{Watanabe}] \label{P6} Let $\overline{\alpha}=(\alpha_0,\alpha_1,\alpha_2,\alpha_3,\alpha_4)\in\mathbb{C}^5$ be such that  such that $\alpha_0+\alpha_1+2\alpha_2+\alpha_3+\alpha_4=1$ and let $X_{VI}(\overline{\alpha})$ be the solution set in $\mathcal{U}$ of 
\[S_{VI}(\overline{\alpha})\left\{
\begin{array}{rll}
y' &=& \frac{1}{t(t-1)}(2xy(y-1)(y-t)-\{\alpha_4(y-1)(y-t)+\alpha_3y(y-t)\\
& & +(\alpha_0-1)y(y-1)\})\\
x'&=& \frac{1}{t(t-1)}(-x^2(3y^2-2(1+t)y+t)+x\{2(\alpha_0+\alpha_3+\alpha_4-1)y\\
& &-\alpha_4(1+t)-\alpha_3t-\alpha_0+1\}-\alpha_2(\alpha_1+\alpha_2))
\end{array}\right.\]
Let $\mathcal{M}$ be the union of the hyperplanes given by $\{\alpha_{i} = n\in\mathbb{Z}\}$ for $i=0,1,3,4$ and $\{\alpha_0\pm\alpha_1\pm\alpha_3\pm\alpha_4-1=m\in 2\mathbb{Z}\}$.
\begin{enumerate}
\item If $\alpha_1,\alpha_3,\alpha_4\in\mathbb{C}$ are mutually generic, then for  $\alpha_0\in 2\mathbb{Z}$, $X_{VI}(\overline{\alpha})$ is not strongly minimal.
\item If $\alpha_{0}, \alpha_{1}, \alpha_{3}, \alpha_{4}$ are mutually generic then $X_{VI}(\overline{\alpha})$ is is geometrically trivial and $\omega$-categorical.
\end{enumerate}
\end{fct}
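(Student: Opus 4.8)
The plan is to handle the two assertions by quite different methods: part (1) by an explicit construction of an infinite co-infinite definable subset, and part (2) by the Zilber trichotomy for strongly minimal sets in $DCF_0$ (see the survey in \cite{NagPil}) together with an analysis of the extended affine Weyl group $\widetilde W(D_4^{(1)})$ of Backlund symmetries of $P_{VI}$. Note first that for part (2) one needs $X_{VI}(\overline\alpha)$ to be strongly minimal; this is the generic case of the full criterion (of the same shape as Facts \ref{P3}(1) and \ref{P5}(1), encoded by the wall set $\mathcal M$), so when $\alpha_0,\alpha_1,\alpha_3,\alpha_4$ are mutually generic we have $\overline\alpha\notin\mathcal M$ and $X_{VI}(\overline\alpha)$ is strongly minimal.

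For part (1), I would use that the hyperplane $\{\alpha_0\in\mathbb Z\}$ is a reflection wall for the action of $\widetilde W(D_4^{(1)})$ on the parameter space, and that $P_{VI}$ carries \emph{classical} (Riccati-type) solutions along such walls. Concretely, when $\alpha_0\in 2\mathbb Z$ and $\alpha_1,\alpha_3,\alpha_4$ are generic, I would apply a suitable composition of Backlund transformations -- the translation part of $\widetilde W(D_4^{(1)})$ reaches the even sublattice from a base Riccati parameter -- to bring $\overline\alpha$ to a configuration for which $S_{VI}(\overline\alpha)$ visibly admits a one-parameter family of solutions $(x,y)$ with $y$ satisfying a first-order Riccati equation $y'=a(t)y^2+b(t)y+c(t)$ reducible to the Gauss hypergeometric equation. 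Intersecting the (definable) solution set of this Riccati equation with $X_{VI}(\overline\alpha)$ yields an infinite definable subset $Y$; since a generic solution of $P_{VI}(\overline\alpha)$ is a genuine transcendent and does not in general satisfy a first-order algebraic equation over $\mathbb C(t)$, the complement $X_{VI}(\overline\alpha)\setminus Y$ is also infinite. Thus $X_{VI}(\overline\alpha)$ has an infinite co-infinite definable subset and is not strongly minimal.

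For the geometric triviality in part (2), I would invoke the trichotomy: a strongly minimal set in $DCF_0$ is either (a) geometrically trivial, (b) non-trivial locally modular -- hence non-orthogonal to the Manin kernel of a simple abelian variety not descending to the constants -- or (c) non-locally-modular, hence non-orthogonal to the field of constants $\mathbb C$. Both (b) and (c) are excluded by the strong irreducibility of a generic $P_{VI}$, namely that its solutions are not obtainable from classical operations (linear/Fuchsian equations, abelian functions, or lower Painlev\'e equations) in the sense of Umemura; this is the substantial analytic input supplied by \cite{Watanabe} and the subsequent model-theoretic reading in \cite{NagPil1}. (I emphasise that Definition \ref{defnUm} alone does \emph{not} rule out (b) or (c); the full Umemura--Watanabe irreducibility is genuinely stronger.) What remains, case (a), is exactly geometric triviality.

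Finally, to upgrade geometric triviality to $\omega$-categoricity I would show that for $y\in X_{VI}(\overline\alpha)$ the set $X_{VI}(\overline\alpha)\cap\mathbb C(t)\gen{y}^{alg}$ is finite. The mechanism is that any algebraic dependence over $\mathbb C(t)$ between two solutions is induced by a Backlund transformation defined over $\mathbb C(t)$; the self-transformations of $X_{VI}(\overline\alpha)$ then form the stabiliser of $\overline\alpha$ in $\widetilde W(D_4^{(1)})$ augmented by the elementary discrete symmetries of $S_{VI}(\overline\alpha)$, a \emph{finite} group for generic $\overline\alpha$, which accounts for the finitely many -- and, unlike $P_I,P_{II},P_{IV},P_V$, possibly more than one -- solutions interalgebraic with $y$. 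I expect the \textbf{main obstacle} to be precisely the ``no unexpected relations'' statement underlying this mechanism: proving that \emph{every} interalgebraicity between solutions comes from a Backlund transformation, and hence that no infinite or rich binary structure of the Freitag--Scanlon type can occur. This is the delicate heart of the argument and the reason $P_{VI}$ lands only at $\omega$-categoricity rather than strict disintegration.
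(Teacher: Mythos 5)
The paper offers no proof of Fact \ref{P6}: it is stated as a Fact and imported wholesale from the cited references (\cite{NagPil}, \cite{NagPil1}, \cite{Lisovyy} and Watanabe's work on $P_{VI}$), so there is no in-paper argument to measure you against. Your outline does track the strategy of those references: failure of strong minimality is witnessed by an order-one Riccati subvariety, exactly as the paper's closing remark of Section 3 indicates; geometric triviality is obtained from the Hrushovski--Sokolovi\'c trichotomy by excluding nonorthogonality to the constants and to Manin kernels of simple abelian varieties, using the Umemura--Watanabe classification of classical solutions; and $\omega$-categoricity amounts to bounding $X_{VI}(\overline{\alpha})\cap\mathbb{C}(t)\gen{y}^{alg}$. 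You are also right on two points worth making explicit: Definition \ref{defnUm} alone does not exclude the locally modular or field-like cases of the trichotomy, and item (2) silently presupposes strong minimality of the generic $X_{VI}(\overline{\alpha})$, which is itself a theorem of \cite{NagPil} rather than a formal consequence of the definition of $\mathcal{M}$.

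As a proof, however, the proposal has genuine gaps rather than merely omitted routine detail. For part (1), the Riccati locus visible directly on $S_{VI}(\overline{\alpha})$ is $\alpha_2(\alpha_1+\alpha_2)=0$, where $\{x=0\}$ is an invariant order-one subvariety (set $x=0$ in the second equation of the system); to reach $\alpha_0\in 2\mathbb{Z}$ with $\alpha_1,\alpha_3,\alpha_4$ mutually generic you must exhibit an explicit element of $\widetilde{W}(D_4^{(1)})$, realized as an actual Backlund transformation of $S_{VI}$, carrying such a point onto that wall, and check that the birational image of $\{x=0\}$ remains an infinite and co-infinite definable subset. Your remark that ``the translation part reaches the even sublattice'' is a plausible heuristic for why $2\mathbb{Z}$ rather than $\mathbb{Z}$ appears, but it is not a computation, and the precise reachable set is exactly the content being asserted. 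For part (2), the claim that every algebraic dependence between two solutions is induced by a Backlund transformation fixing $\overline{\alpha}$ is not a lemma you may invoke: it is essentially the theorem being cited. The arguments of \cite{NagPil} and \cite{NagPil1} do not first classify correspondences as Backlund maps; they bound $|X_{VI}(\overline{\alpha})\cap\mathbb{C}(t)\gen{y_1}^{alg}|$ by applying the Umemura--Watanabe classification of classical solutions over the differential field $\mathbb{C}(t)\gen{y_1}$ generated by one solution. Since you yourself flag this ``no unexpected relations'' step as the delicate heart and do not supply it, the proposal should be read as a correct plan of attack with its hardest steps outstanding, not as a proof.
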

We do not give the results concerning the full classification of algebraic solutions as this is beyond the scope of this paper (cf. \cite{Lisovyy}). However it is well known that for $\alpha_0=\alpha_1=\alpha_3=\alpha_4=0$, $X_{VI}$ has infinitely many algebraic solutions over $\mathbb{C}(t)$. Indeed, $X_{VI}(0,0,1/2,0,0)$ can be identified with the smallest Zariski dense definable subgroup of the elliptic curve $E_t:\; y^2=x(x-1)(x-t)$ (i.e.  $X_{VI}(0,0,1/2,0,0)$ is what is called the Manin kernel of $E_t$). Algebraic solutions then corresponds to the torsion points $E_t^{tor}$. Furthermore, by applying Backlund transformations (see \cite{Okam1}) one has the following:
\begin{fct}
For $\alpha_0,\alpha_1,\alpha_3,\alpha_4\in 1/2+\mathbb{Z}$, $X_{VI}(\overline{\alpha})$ has infinitely many algebraic solutions over $\mathbb{C}(t)$.
\end{fct}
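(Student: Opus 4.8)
The plan is to transport, via B\"acklund transformations, the infinitely many algebraic solutions that $X_{VI}(0,0,\frac{1}{2},0,0)$ carries as the Manin kernel of $E_t$ (namely the images of $E_t^{tor}$) onto every parameter $\overline\alpha$ with $\alpha_0,\alpha_1,\alpha_3,\alpha_4\in\frac{1}{2}+\mathbb Z$. Recall (Okamoto \cite{Okam1}) that the B\"acklund transformations of $S_{VI}$ form the extended affine Weyl group $\widetilde W(D_4^{(1)})$, generated by involutions $s_0,s_1,s_2,s_3,s_4$ whose action on the parameters is $s_i(\overline\alpha)_j=\alpha_j-a_{ij}\alpha_i$, where $(a_{ij})$ is the Cartan matrix of $D_4^{(1)}$ with central node $2$. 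On the four external parameters $(\alpha_0,\alpha_1,\alpha_3,\alpha_4)$ the generators $s_0,s_1,s_3,s_4$ act by negating the corresponding coordinate (with $\alpha_2$ adjusted through the level condition $\alpha_0+\alpha_1+2\alpha_2+\alpha_3+\alpha_4=1$), while $s_2$ sends $(\alpha_0,\alpha_1,\alpha_2,\alpha_3,\alpha_4)$ to $(\alpha_0+\alpha_2,\alpha_1+\alpha_2,-\alpha_2,\alpha_3+\alpha_2,\alpha_4+\alpha_2)$. The group further contains a full-rank lattice of translations (the Okamoto--Schlesinger translations) acting on $(\alpha_0,\alpha_1,\alpha_3,\alpha_4)$ by integer shifts and containing every shift by the root lattice $Q(D_4)=\{(n_0,n_1,n_3,n_4)\in\mathbb Z^4:\sum_i n_i\in 2\mathbb Z\}$.

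First I would record that these transformations preserve the feature we care about. Since each $s_i$ is a transformation of the dependent variables $(x,y)$ that is rational over $\mathbb C(t)$, it maps algebraic solutions of $S_{VI}(\overline\alpha)$ to algebraic solutions of $S_{VI}(s_i\overline\alpha)$; and being a birational bijection with only finitely many points of indeterminacy, it sends an infinite set of algebraic solutions to an infinite set of algebraic solutions. Consequently it suffices to prove that every $\overline\alpha$ with $\alpha_0,\alpha_1,\alpha_3,\alpha_4\in\frac{1}{2}+\mathbb Z$ lies in the $\widetilde W(D_4^{(1)})$-orbit of $(0,0,\frac{1}{2},0,0)$.

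The key observation is that $s_2$ is the only generator that alters the residues of the external parameters modulo $1$: the negations $s_0,s_1,s_3,s_4$ and all the translations fix the class of $(\alpha_0,\alpha_1,\alpha_3,\alpha_4)$ in $(\mathbb R/\mathbb Z)^4$, so applied to the integer point $(0,0,0,0)$ they can never leave the integers, whereas $s_2(0,0,\frac{1}{2},0,0)=(\frac{1}{2},\frac{1}{2},-\frac{1}{2},\frac{1}{2},\frac{1}{2})$ has external part $(\frac{1}{2},\frac{1}{2},\frac{1}{2},\frac{1}{2})$. Starting from this half-integer seed, the subgroup generated by the single-coordinate negations $s_0,s_1,s_3,s_4$ together with the $Q(D_4)$-translations reaches every point of $(\frac{1}{2}+\mathbb Z)^4$: given a target $(m_i+\frac{1}{2})_i$ with $m_i\in\mathbb Z$, pick signs $\epsilon_i\in\{\pm1\}$ and put $n_i=m_i$ when $\epsilon_i=1$ and $n_i=m_i+1$ when $\epsilon_i=-1$, so that $m_i+\frac{1}{2}=\epsilon_i\frac{1}{2}+n_i$; flipping a single $\epsilon_i$ if necessary makes $\sum_i n_i$ even, so the negations realise the sign pattern $(\epsilon_i\frac{1}{2})_i$ and a $Q(D_4)$-translation supplies the shift $(n_i)_i$. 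As $\alpha_2$ is then forced by the level condition, the entire tuple $\overline\alpha$ is reached.

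I expect the orbit computation to be the crux. The analytic inputs---preservation of algebraicity and of infiniteness---are immediate once one knows the transformations are rational over $\mathbb C(t)$ and birational, and the only genuine content is the modular obstruction above: no word in $s_0,s_1,s_3,s_4$ and the translations can escape the integer parameters, so the half-integer family sits on a distinct $s_2$-image of the Manin kernel's orbit. What remains is bookkeeping: matching the explicit parameter action of the $s_i$ to the chosen normalisation of $S_{VI}(\overline\alpha)$, and checking that the finitely many indeterminacy loci of the transformations actually used omit only finitely many of the torsion solutions, so that infinitely many algebraic solutions do survive the transport.
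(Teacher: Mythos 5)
Your proposal is correct and follows exactly the route the paper intends: the paper's entire ``proof'' is the preceding observation that $X_{VI}(0,0,1/2,0,0)$ is the Manin kernel of $E_t$ with algebraic solutions the torsion points, followed by the one-line appeal to B\"acklund transformations via \cite{Okam1}. You have simply filled in the details the paper delegates to that citation --- the role of $s_2$ in moving from integer to half-integer external parameters and the orbit bookkeeping with negations and translations --- so the two arguments are essentially the same.
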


\subsection{Conclusion}
\noindent As we shall see in Section 5, the key points from this section that will play an important role in our strategy for the proofs of the main results are:
\begin{enumerate}
\item All the generic Painlev\'e equations are irreducible with respect to classical functions, geometrically trivial and $\omega$-categorical (of course in most cases strictly disintegrated).
\item For each family, there are exceptional sets of complex parameters (e.g. $\mathbb{Z}$ and $1/2+\mathbb{Z}$ for $P_{II}$) where either algebraic solutions exist or strong minimality does not hold.
\end{enumerate}
\begin{rmk}
It is also crucial to note that for any of the families $P_{II}-P_{VI}$, if for some choice of parameters the equation is not strongly minimal, then this is witnessed by the existence of an order 1 algebraic differential subvariety. For example for $P_{II}(-1/2)$ it is not hard to see that any solution of $y'=-y^2-t/2$ is also a solution of $P_{II}(-1/2)$. These differential subvarieties are usually called the Riccati solutions of the Painlev\'e equations.
\end{rmk}

\section{Nonorthogonality}
Let us now introduce the most important notion used in this paper: nonorthogonality. We restrict ourselves to strongly minimal sets (or types) as this is all we need for the Painlev\'e equations. We direct the reader to \cite{Pillay-book} for the more general context.
\begin{defn} Suppose $X$ and $Y$ are strongly minimal sets and denote by $\pi_1:X\times Y\rightarrow X$ and $\pi_2:X\times Y\rightarrow Y$ the projections to $X$ and $Y$ respectively. We say that $X$ and $Y$ are {\em nonorthogonal} if there is some infinite definable relation $R\subset X\times Y$ such that ${\pi_1}_{\restriction R}$ and ${\pi_2}_{\restriction R}$ are finite-to-one functions.  
\end{defn}
Nonorthogonality is an equivalence relations on strongly minimal sets and given a family of strongly minimal sets, it is very natural to ask about nonorthogonalities within that family. Of course in the case of the Painlev\'e equations, the Backlund transformations are very relevant. For example, for $X_{II}(\alpha)$ the bijection $T_{+}:X_{II}(\alpha)\rightarrow X_{II}(\alpha+1)$
\[T_{+}(z)=-z-\frac{\alpha+1/2}{z'+z^2+t/2}\]
shows that for $\alpha\not\in 1/2+\mathbb{Z}$, $X_{II}(\alpha)$ is nonorthogonal to $X_{II}(\alpha+1)$. 

Moreover, $X_{II}(\alpha)$ and $X_{II}(\alpha+1)$ are nonorthogonal in a very special way, namely one does not require extra parameters (other that $\alpha$ and $t$) to witness that they are nonorthogonal. It turns out that this is no coincidence but first we need a definition.
\begin{defn}
Two strongly minimal sets $X$ and $Y$ (both defined over some differential field $K$ say) are said to be {\em  non weakly orthogonal} if they are nonorthogonal, that is there is an infinite finite-to-finite relation $R\subseteq X\times Y$, and the formula defining $R$ has no parameters in $\mathcal{U}\setminus K^{alg}$.
\end{defn}
\begin{fct}[{\bf \cite{Pillay-book}, Corollary 2.5.5}]\label{weaklyort}
Let $X$ and $Y$ be a geometrically trivial strongly minimal sets (both defined over some differential field $K$). Assume that they are nonorthogonal. Then they are non weakly orthogonal.
\end{fct}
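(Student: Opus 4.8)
The plan is to argue directly in the language of forking, reducing everything to the generic types. We may work over $K^{alg}$, since non weak orthogonality only refers to parameters in $K^{alg}$ and $\mathrm{acl}(K)=K^{alg}$ in $DCF_0$; let $p$ and $q$ be the generic types of $X$ and $Y$ over $K$, so that $X\not\perp Y$ is the same as $p\not\perp q$. It then suffices to produce a single generic $a\in X$ over $K$ together with some $b\in Y$ such that $b\in\mathrm{acl}(K\gen{a})\setminus\mathrm{acl}(K)$: by strong minimality one automatically gets $a\in\mathrm{acl}(K\gen{b})$ as well, and the resulting interalgebraicity relation $R\subseteq X\times Y$ is infinite, finite-to-finite, and definable over $K^{alg}$, which is exactly non weak orthogonality.

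First I would unwind the hypothesis of nonorthogonality. From an infinite finite-to-finite relation $R\subseteq X\times Y$ defined over $K\bar c$, a generic point $(a,b)$ of $R$ over $K\bar c$ has $a,b$ generic in their respective sets and interalgebraic over $K\bar c$; by choosing $(a,b)$ generic we may further arrange $a$ and $b$ each independent from $\bar c$ over $K$. Thus $a\in\mathrm{acl}(K\bar c\,b)\setminus\mathrm{acl}(K\bar c)$ while $a$ is independent from $\bar c$ over $K$. If $\bar c$ happened to lie in $X\cup Y$ this configuration would already contradict geometric triviality outright; the difficulty, and the whole content of the theorem, is that the witnessing parameters $\bar c$ need not lie in the geometry.

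The crux is therefore to re-witness the nonorthogonality \emph{entirely inside} the trivial geometry. Here I would invoke the theory of regular types for the trivial minimal type $p$: nonorthogonality of $q$ to a trivial regular type forces $q$ to be dominated by a finite Morley sequence in $p$, so that after adjusting the configuration one obtains independent realizations $a_1,\dots,a_n\models p$ and a realization $b_0\models q$, all generic over $K$, with $b_0\in\mathrm{acl}(K a_1\cdots a_n)\setminus\mathrm{acl}(K)$. This is precisely the step where triviality of $p$ is indispensable, and it is also where one uses that, since $X\not\perp Y$ and both are geometrically trivial, the generics of $Y$ lie in the same trivial pregeometry as those of $X$. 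Granting this, all of $b_0,a_1,\dots,a_n$ are points of the trivial geometry of the nonorthogonality class, so geometric triviality applies to the algebraic dependence of $b_0$ on $\{a_1,\dots,a_n\}$ over $K$: some pair among these points must already be algebraically dependent over $K$. Independence of the Morley sequence excludes every pair $(a_i,a_j)$, so necessarily $b_0\in\mathrm{acl}(K\gen{a_i})$ for some $i$, with $b_0\notin\mathrm{acl}(K)$. Taking $a=a_i$ and $b=b_0$ then completes the reduction of the first paragraph. I expect the domination step to be the main obstacle, since it is exactly where the external parameters $\bar c$ must be eliminated by the special structure of trivial types rather than by bare forking calculus.
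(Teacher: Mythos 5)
The paper does not actually prove this statement; it is imported verbatim as Corollary 2.5.5 of Pillay's \emph{Geometric Stability Theory}, so your attempt has to be judged against the standard argument rather than anything in the text. Your overall shape (move the whole configuration into the geometry, then use triviality to localize the dependence to a single pair) is the right one, but the central step --- the one you yourself flag as ``the main obstacle'' --- is a genuine gap, not a routine appeal to known theory. You claim that nonorthogonality of $q$ to the trivial regular type $p$ ``forces $q$ to be dominated by a finite Morley sequence in $p$'' and that this yields $b_0\in\mathrm{acl}(Ka_1\cdots a_n)\setminus\mathrm{acl}(K)$ with $a_1,\dots,a_n\models p$. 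Domination-equivalence of nonorthogonal regular types is indeed standard, but it requires no triviality and is strictly weaker than the algebraicity you extract from it: for the generic type of a coset of a Manin kernel versus the generic of the Manin kernel itself (nonorthogonal, locally modular, non-trivial), the types are domination-equivalent yet no realization of $q$ is algebraic over the base together with realizations of $p$. So the passage from domination to $b_0\in\mathrm{acl}(Ka_1\cdots a_n)$ is exactly where triviality must do its work, and your write-up defers that work to a ``fact'' which, as stated, is false without triviality and is essentially equivalent to the theorem with it.

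The standard repair is different in two respects. First, one does not produce a Morley sequence in $p$ alone: one takes $e=\mathrm{Cb}(\mathrm{stp}(ab/K\bar c))$, uses that a canonical base is algebraic over finitely many terms of a Morley sequence $(a_1,b_1),\dots,(a_k,b_k)$ in the rank-one type of the \emph{pair}, and thereby lands in the configuration $a_0\in\mathrm{acl}(K,a_1,b_1,\dots,a_k,b_k,b_0)\setminus\mathrm{acl}(K,a_1,b_1,\dots,a_k,b_k)$ --- a tuple of points of $X\cup Y$, not of $X$ alone. Second, to apply triviality to this mixed tuple you need the combined pregeometry on $X\cup Y$ over $K$ to be trivial; your one-line justification (``the generics of $Y$ lie in the same trivial pregeometry as those of $X$'') is true but not free, since the paper's definition of geometric triviality of $X$ only speaks of tuples from $X$. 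It follows from triviality of $X$ over extensions of $K$ by a conjugation argument: choosing $\bar c'\equiv_K\bar c$ independent from the given tuple, the correspondence $R^{\sigma}$ makes every $Y$-generic over $K\bar c'$ interalgebraic over $K\bar c'$ with an $X$-point, so the combined geometry over $\mathrm{acl}(K\bar c')$ is a localization of that of $X$. With both of these in place, triviality forces some pair of the tuple to be $K$-dependent; pairs with distinct Morley-sequence indices are ruled out by rank, so the dependent pair is some $(a_i,b_i)\in X\times Y$, which is the desired witness. Your final localization step is fine once it is fed this corrected input.
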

So in particular if two generic members of the Painlev\'e family are nonorthogonal, then they are non weakly orthogonal. We now proceed to show that distinct generic Painlev\'e equations from any of the families are orthogonal.

\section{On Transformations in the Painlev\'e Family.}
In this section, $X_{I}$, $X_{II}(\alpha)$, $X_{III}({v_{1},v_{2}})$, $X_{IV}(\alpha,\beta)$, $X_{V}(\alpha,\beta,\gamma)$ and $X_{VI}(\overline{\alpha})$ will denote the sets of solution of the Painlev\'e equations as given to us in Section 3. Also as we have seen in Fact \ref{P4} and \ref{P5}, for $P_{IV}(\alpha,\beta)$ and $P_{V}(\alpha,\beta,\gamma)$ by abuse of notation, we will sometime write $X_{IV}(\alpha,\beta)$ and $X_{V}(\alpha,\beta,\gamma)$ as $X_{IV}(v_{1},v_{2},v_3)$ and $X_{V}(v_{1},v_{2},v_{3},v_4)$ respectively. For example for $P_{IV}(\alpha,\beta)$, the notation $X_{IV}(v_{1},v_{2},v_3)$ simply mean that (as in Section 3.4) we choose $v_1,v_2,v_3\in\mathbb{C}$ such that $v_1+v_2+v_3=0$, $\alpha = 3v_{3} + 1$ and $\beta = -2(v_{2}-v_{1})^{2}$.

\subsection{The special case of $P_I$}
We show in section that $X_{I}$ is orthogonal to all the other generic Painlev\'e equations. We give a very detailed proof of the first proposition as this will be a model for many of the other cases.
\begin{prop}\label{Ort1}
$X_{I}$ is orthogonal to any generic $X_{II}(\alpha)$.
\end{prop}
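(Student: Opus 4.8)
The plan is to argue by contradiction, exploiting the fact from Fact~\ref{P1} that $X_{I}$ is irreducible with respect to classical functions, so that \emph{every} non-algebraic solution of $P_I$ has transcendence degree exactly $2$ over $\mathbb{C}(t)$. Suppose then that $X_I$ is nonorthogonal to $X_{II}(\alpha)$ for some generic $\alpha$. Both sets are geometrically trivial and strongly minimal (Facts~\ref{P1} and~\ref{P2}), and, crucially, both are defined over the small differential field $K=\mathbb{Q}(t,\alpha)$ (recall $\alpha$ is a constant, so $K$ is a differential subfield of $\mathcal{U}$). Hence Fact~\ref{weaklyort} applies and yields non weak orthogonality: there is an infinite finite-to-finite relation $R\subseteq X_I\times X_{II}(\alpha)$ whose defining formula has parameters only in $K^{alg}=\mathbb{Q}(t,\alpha)^{alg}$, rather than in all of $\mathbb{C}(t)^{alg}$. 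Choosing $y_1\in X_I$ generic over $K^{alg}$ and a partner $y_2\in X_{II}(\alpha)$ with $(y_1,y_2)\in R$, the finiteness of both projections makes $y_1$ and $y_2$ interalgebraic over $\mathbb{Q}(t,\alpha)^{alg}$. At this generic $\alpha$ both points have transcendence degree $2$ over $K$, so there is no numerical obstruction yet; the contradiction must be produced by varying $\alpha$.

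The key point is that, since $R$ is defined over $\mathbb{Q}(t,\alpha)^{alg}$ and not over some larger field involving extra independent constants, the entire configuration is uniform in the single parameter $\alpha$ and can therefore be specialized. Writing the parameters of $R$ as points of $\mathbb{Q}(t,\alpha)^{alg}$, i.e.\ as algebraic functions of $\alpha$ over $\mathbb{Q}(t)^{alg}$, I would regard $R=R_\alpha$ as a member of a family $\{R_a\}$ uniformly definable over $\mathbb{Q}(t)^{alg}$ with $R_a\subseteq X_I\times X_{II}(a)$. The condition ``$R_a$ is infinite and both projections to $X_I$ and to $X_{II}(a)$ have finite fibres'' is, by $\omega$-stability (definability of Morley rank and of fibre cardinalities), a definable condition on the constant $a$; restricted to the field of constants (which is purely stably embedded as an algebraically closed field) it cuts out, in the single variable $a$, a finite or cofinite subset of $\mathbb{C}$. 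As it holds at the generic value $a=\alpha$, it is cofinite. I may thus pick $\alpha_0\in 1/2+\mathbb{Z}$ avoiding the finite exceptional set and obtain an infinite finite-to-finite relation $R_{\alpha_0}\subseteq X_I\times X_{II}(\alpha_0)$, now defined over $\mathbb{Q}(t)^{alg}\subseteq\mathbb{C}(t)^{alg}$.

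The specialization lands precisely on the exceptional locus: by Fact~\ref{P2}(1), $X_{II}(\alpha_0)$ is \emph{not} strongly minimal, and by the Remark closing Section~3 this failure is witnessed by an order-$1$ (Riccati) subvariety, so the generic solution of $P_{II}(\alpha_0)$ has transcendence degree $2$ over $\mathbb{C}(t)$ while the Riccati locus has transcendence degree $1$. Since $X_I$ is strongly minimal, $\pi_1(R_{\alpha_0})$ is cofinite in $X_I$ and $R_{\alpha_0}$ has Morley rank $1$; hence $\pi_2(R_{\alpha_0})$ is an infinite rank-$1$ subset of $X_{II}(\alpha_0)$. Taking $y_2$ generic in this image, so that $tr.deg(\mathbb{C}(t)\gen{y_2}/\mathbb{C}(t))=1$, together with a partner $y_1$ with $(y_1,y_2)\in R_{\alpha_0}$, interalgebraicity over $\mathbb{C}(t)^{alg}$ forces $tr.deg(\mathbb{C}(t)\gen{y_1}/\mathbb{C}(t))=1$. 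But then $y_1\in X_I$ with $y_1\notin\mathbb{C}(t)^{alg}$ yet of transcendence degree $1$, contradicting irreducibility of $P_I$ with respect to classical functions (Fact~\ref{P1}(1)), which forces transcendence degree $2$.

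The main obstacle I anticipate is the middle step: justifying rigorously that the finite-to-finite correspondence ``spreads out'' definably in $\alpha$ and survives specialization to the special value $\alpha_0$ (one must ensure that neither infiniteness nor finiteness of the fibres degenerates there, which is why it suffices to avoid a finite set and why the infinite set $1/2+\mathbb{Z}$ is available). A secondary technical point is verifying that the rank-$1$ image $\pi_2(R_{\alpha_0})$ genuinely meets $X_{II}(\alpha_0)$ in solutions of transcendence degree $1$, i.e.\ on the Riccati locus rather than among the generic (transcendence degree $2$) solutions; this is exactly where the failure of strong minimality at $\alpha_0$, and its geometric witness, is used.
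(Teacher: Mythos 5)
Your overall strategy---pass to non weak orthogonality via Fact \ref{weaklyort}, regard the relation as a family $R_a$ uniformly definable over $\mathbb{Q}(t)^{alg}$, use strong minimality of the constant field together with genericity of $\alpha$ to specialize to some $\alpha_0\in 1/2+\mathbb{Z}$, and then contradict the known behaviour of $X_{II}(\alpha_0)$---is exactly the paper's. The step you flag as the main obstacle (that the finite-to-finite condition spreads out definably in $a$ and survives specialization off a finite exceptional set) is in fact unproblematic, and the paper handles it simply by writing down an explicit formula $\sigma(u,v)$ and invoking strong minimality of $\mathbb{C}$.

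The genuine gap is in your endgame, precisely at the point you dismiss as ``a secondary technical point.'' From the fact that $\pi_2(R_{\alpha_0})$ is an infinite definable set of Morley rank $1$ you conclude that its generic element $y_2$ satisfies $tr.deg(\mathbb{C}(t)\gen{y_2}/\mathbb{C}(t))=1$. That inference is false: Morley rank $1$ does not bound transcendence degree---$X_I$ itself is strongly minimal (rank $1$) while every one of its elements has transcendence degree $2$ over $\mathbb{C}(t)$. There is no reason the image of $R_{\alpha_0}$ should meet the Riccati locus of $X_{II}(\alpha_0)$ rather than lie entirely among the transcendence-degree-$2$ solutions, and in the latter case your contradiction evaporates. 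The fix, which is what the paper does, is to first upgrade $R$ to the graph of a bijection between $X_I$ and \emph{all} of $X_{II}(\alpha)$: this is possible because both sets are strictly disintegrated over $\mathbb{C}(t)$ and have no points in $\mathbb{C}(t)^{alg}$ (Facts \ref{P1} and \ref{P2}), so each fibre of each projection is a singleton. One then builds the surjectivity clause $\forall y\,\exists^{=1}x\,\varphi(x,y,u,v)$ into the formula $\sigma(u,v)$ being specialized. At $\alpha_0$ this yields a definable bijection of $X_I$ onto the whole of $X_{II}(\alpha_0)$, which would force $X_{II}(\alpha_0)$ to be strongly minimal---directly contradicting Fact \ref{P2}(1)---with no need to locate any transcendence-degree-$1$ points in the image.
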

\begin{proof}
Let $\alpha\in\mathbb{C}$ be generic and for contradiction, suppose that $X_{I}$ is non weakly orthogonal to $X_{II}(\alpha)$ (this suffices by Fact \ref{weaklyort}). By definition, this means that there exists a finite-to-finite definable relation $R\subseteq X_{I}\times X_{II}(\alpha)$ and that $R$ is defined over $\mathbb{Q}(\alpha,t)^{alg}$. Since both $X_{I}$ and $X_{II}(\alpha)$ are strictly disintegrated (by Fact \ref{P1} and Fact \ref{P2}) and they have no elements in $\mathbb{C}(t)^{alg}$, it is not hard to see that $R$ is the graph of a bijection between $X_{I}$ and $X_{II}(\alpha)$. Let us suppose that $R$ is defined by $\varphi(x,y,\alpha,t)$ and let $\sigma(u,v)$ be the $L_{\D}$ formula $\forall x\exists^{=1}y\varphi(x,y,u,v)\wedge \forall y\exists^{=1}x\varphi(x,y,u,v)$. So $\mathcal{U}\models\sigma(\alpha,t)$ and by construction, $\mathcal{U}\models\sigma(\tilde{\alpha},t)$ implies that $\tilde{\alpha}\in\mathbb{C}$ and $X_{I}$ is in bijection with $X_{II}(\tilde{\alpha})$.\medskip

As $\mathbb{C}$ is strongly minimal and $\alpha$ generic, $\sigma(\tilde{\alpha},t)$ is true for all but finitely many $\tilde{\alpha}\in\mathbb{C}$. In particular for some $\alpha_0\in 1/2+\mathbb{Z}$, we have that $\mathcal{U}\models\sigma(\alpha_0,t)$ and hence $X_{I}$ is in bijection with $X_{II}(\alpha_0)$. By Fact \ref{P2} $X_{II}(\alpha_0)$ is not strongly minimal (and indeed contains an order $1$ definable subset). This is impossible.
\end{proof}
Similarly one has the following
\begin{prop}
$X_{I}$ is orthogonal to the generic $X_{III}({v_{1},v_{2}})$, $X_{IV}(v_{1},v_{2},v_3)$, $X_{V}(v_{1},v_{2},v_{3},v_4)$ and $X_{VI}(\overline{\alpha})$.
\end{prop}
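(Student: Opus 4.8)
The plan is to mirror the strategy of Proposition \ref{Ort1} almost verbatim, treating each of the four remaining families in turn. The core idea is a \emph{specialization} argument: assume toward a contradiction that $X_I$ is nonorthogonal to a generic member of one of these families; by Fact \ref{weaklyort} (geometric triviality of both sets) this upgrades to non weak orthogonality, so there is a finite-to-finite definable relation $R\subseteq X_I\times Y$ defined over $\mathbb{Q}(\overline{p},t)^{alg}$, where $\overline{p}$ is the parameter tuple of $Y$. Encoding ``$R$ is a finite-to-finite correspondence witnessing nonorthogonality'' into a single $L_\partial$-formula $\sigma(\overline{u},t)$ with $\overline{u}$ in place of $\overline{p}$, we have $\mathcal{U}\models\sigma(\overline{p},t)$. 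Since $\mathbb{C}$ is strongly minimal and $\overline{p}$ is a generic (algebraically independent) tuple over $\mathbb{Q}(t)^{alg}$, the set of $\overline{u}\in\mathbb{C}^n$ satisfying $\sigma$ is cofinite in $\mathbb{C}^n$, hence meets \emph{any} given nonempty Zariski-open--dense definable family, and in particular can be specialized into the exceptional locus of that family.

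The key steps, carried out once per family, are then: first, fix the appropriate exceptional parameter locus from Section 3 at which the target equation fails to be strongly minimal, i.e.\ at which an order-$1$ Riccati subvariety appears (this is exactly the content of the final Remark of Section 3). Concretely, for $P_{III}$ I would specialize so that $v_1+v_2\in 2\mathbb{Z}$ using Fact \ref{P3}(1); for $P_{IV}$ so that one of $v_i-v_j\in\mathbb{Z}$ using Fact \ref{P4}(1); for $P_V$ so that some $v_i-v_j\in\mathbb{Z}$ using Fact \ref{P5}(1); and for $P_{VI}$ so that $\alpha_0\in 2\mathbb{Z}$ with the remaining parameters still generic, using Fact \ref{P6}(1). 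Second, observe that this specialized parameter value lies in a cofinite, hence nonempty, intersection with the $\sigma$-locus, so $\mathcal{U}\models\sigma(\overline{p}_0,t)$ still holds. Third, conclude that $X_I$ is in finite-to-finite correspondence with the specialized set $Y_0$, which is not strongly minimal. Since a set carrying a finite-to-finite correspondence onto the strongly minimal $X_I$ must itself be strongly minimal (the correspondence transports the strong minimality), this is the desired contradiction.

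One subtlety that must be handled with care, and which I expect to be the main obstacle, is the passage from ``$X_I$ is nonorthogonal to generic $Y$'' to a clean first-order condition that specializes correctly. Unlike the $P_{II}$ case in Proposition \ref{Ort1}, the sets $X_{III}$ and $X_{VI}$ are only $\omega$-categorical rather than strictly disintegrated (Fact \ref{P3}(4), Fact \ref{P6}(2)), so $R$ need not be the graph of a bijection; it is only a finite-to-finite correspondence, and the fibre sizes are bounded but possibly larger than one (Remark \ref{RP3} bounds them by $2$ for $P_{III}$). Thus $\sigma(\overline{u},t)$ should assert the existence of a definable $R$ that is finite-to-finite with fibre sizes bounded by the relevant constant, rather than asserting a bijection; one must verify this property is genuinely first-order expressible and is preserved under the specialization. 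The remaining point requiring attention is that the specialization keeps enough parameters generic for the cited strong-minimality-failure facts to apply (each of Facts \ref{P3}--\ref{P6} states its failure of strong minimality under a genericity hypothesis on the \emph{other} coordinates), but this is arranged simply by specializing a single coordinate while keeping the rest algebraically independent, which the cofiniteness of the $\sigma$-locus in $\mathbb{C}^n$ comfortably allows.
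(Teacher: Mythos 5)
Your overall strategy is the paper's: upgrade to non weak orthogonality via Fact \ref{weaklyort}, encode the correspondence in a formula $\sigma$, specialize the parameters into the locus where strong minimality fails, and use finite-to-finite relations rather than bijections to cover the merely $\omega$-categorical cases $X_{III}$ and $X_{VI}$. But your treatment of $P_{IV}$ contains a genuine error. Fact \ref{P4}(1) states that $X_{IV}$ is strongly minimal if and only if $v_1-v_2\notin\mathbb{Z}$ \emph{or} $v_2-v_3\notin\mathbb{Z}$ \emph{or} $v_3-v_1\notin\mathbb{Z}$; its negation therefore requires \emph{all three} pairwise differences to be integers, which together with $v_1+v_2+v_3=0$ forces $(v_1,v_2,v_3)$ into a countable set of triples in $\tfrac{1}{3}\mathbb{Z}$. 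Specializing so that just ``one of $v_i-v_j\in\mathbb{Z}$'', as you propose, lands on parameters where $X_{IV}$ is still strongly minimal, and no contradiction results. In particular, your closing claim that it always suffices to specialize a single coordinate while keeping the rest algebraically independent is false for $P_{IV}$: no such specialization can reach its non-strongly-minimal locus. The repair is to specialize the coordinates one at a time (first $\tilde{v}_1\in v_2+\mathbb{Z}$ using genericity of $v_1$ over $v_2$, then $v_2$ into $\tfrac{1}{3}\mathbb{Z}$ using that $v_2$ is still transcendental over $\mathbb{Q}$), exactly as the paper does for its two-step specializations in the later propositions.

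Relatedly, your justification that the $\sigma$-locus is ``cofinite in $\mathbb{C}^n$'' is only correct for $n=1$: for $n\geq 2$ a definable subset of $\mathbb{C}^n$ containing a point generic over the field of definition need only contain a Zariski-open dense set, and its complement can be an infinite proper subvariety, so it need not meet an arbitrary countable exceptional set. This is harmless for $P_{III}$, $P_V$ and $P_{VI}$, where only one coordinate is moved and strong minimality of $\mathbb{C}$ in that single variable is all that is used, but it is precisely the point at which the $P_{IV}$ case requires the iterated, one-variable-at-a-time argument described above.
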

\begin{proof}
In the case of the generic $X_{IV}(v_{1},v_{2},v_3)$ and $X_{V}(v_{1},v_{2},v_{3},v_4)$ the same proof as the one given above works. One only need to replace $1/2+\mathbb{Z}$ by the appropriate exceptional sets. For $X_{III}(v_{1},v_{2})$ and $X_{VI}(\overline{\alpha})$ the only other slight modification is to replace the definable bijection $R$ by one-to-finite maps (as the sets are only $\omega$-categorical). But this does not pose any real problems.
\end{proof}

\subsection{Orthogonality in the remaining cases}
Although the idea of the proofs are the same, when trying to prove for example that generic $X_{II}(\alpha)$ is orthogonal to generic $X_{III}(v_1,v_2)$ one needs to be more careful as the parameters are not necessarily assumed to be mutually generic. We start again with an easy case.
\begin{prop}
The generic $X_{II}(\alpha)$ is orthogonal to the generic $X_{V}(v_{1},v_{2}$, $v_{3},v_4)$.
\end{prop}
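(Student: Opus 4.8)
The plan is to follow the template of Proposition~\ref{Ort1}, adapting it to the fact that the single parameter $\alpha$ of $P_{II}$ and the tuple $\overline{v}=(v_1,v_2,v_3,v_4)$ of $P_V$ need not be mutually generic. First I would assume, for contradiction, that the generic $X_{II}(\alpha)$ is non weakly orthogonal to the generic $X_V(\overline{v})$; this suffices by Fact~\ref{weaklyort}, since both sets are geometrically trivial. As in Proposition~\ref{Ort1}, both are strictly disintegrated (Fact~\ref{P2}(4) and Fact~\ref{P5}(3)) and have no solution in $\mathbb{C}(t)^{alg}$, so the witnessing relation $R$ is the graph of a bijection defined over $\mathbb{Q}(\alpha,\overline{v},t)^{alg}$ by a formula $\varphi(x,y,\alpha,\overline{v},t)$. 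I then form the parameter-free formula $\sigma(u,\overline{w},s)$ expressing that $\varphi(\cdot,\cdot,u,\overline{w},s)$ defines a bijection between $X_{II}(u)$ and $X_V(\overline{w})$ (with $\delta=-1/2$ and $w_4=-(w_1+w_2+w_3)$ built in), so that $\mathcal{U}\models\sigma(\alpha,\overline{v},t)$ and, crucially, $\mathcal{U}\models\sigma(u_0,\overline{w}_0,t)$ forces $X_{II}(u_0)$ to be strongly minimal \emph{if and only if} $X_V(\overline{w}_0)$ is, strong minimality being invariant under definable bijection.

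Since $\mathbb{C}$ is stably embedded with the pure field structure, the trace $D=\{(u,\overline{w})\in\mathbb{C}^4:\mathcal{U}\models\sigma(u,\overline{w},t)\}$ is a constructible subset of $\mathbb{C}^4$ containing $(\alpha,\overline{v})$. The aim is to produce $(u_0,\overline{w}_0)\in D$ at which exactly one of $X_{II}(u_0)$, $X_V(\overline{w}_0)$ fails to be strongly minimal, contradicting the biconditional. I would split on whether $\alpha$ is algebraically independent from $\overline{v}$. If $\alpha\notin\mathbb{Q}(\overline{v})^{alg}$, then $(\alpha,\overline{v})$ has transcendence degree $4$ over $\mathbb{Q}$, so the component of $\overline{D}$ through it is all of $\mathbb{C}^4$ and $D$ is Zariski dense. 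Fixing $u_0\in 1/2+\mathbb{Z}$ (so $X_{II}(u_0)$ is not strongly minimal by Fact~\ref{P2}(1)), for all but finitely many such $u_0$ the slice $D\cap(\{u_0\}\times\mathbb{C}^3)$ is Zariski dense in $\{u_0\}\times\mathbb{C}^3$, and I can choose $\overline{w}_0$ in it avoiding the countably many hyperplanes $\{w_i-w_j\in\mathbb{Z}\}$; then $X_V(\overline{w}_0)$ is strongly minimal by Fact~\ref{P5}(1), and the contradiction follows exactly as in Proposition~\ref{Ort1}.

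The delicate case is $\alpha\in\mathbb{Q}(\overline{v})^{alg}$, where the parameters being only separately generic bites. Here the component $V$ of $\overline{D}$ through $(\alpha,\overline{v})$ has dimension $3$ and projects dominantly onto $W=\{\textstyle\sum w_i=0\}$, with finite generic fibres; over a dense open $U'\subseteq W$ the $u$-coordinate on $V$ is cut out by a monic $p(u,\overline{w})=u^k+a_1(\overline{w})u^{k-1}+\cdots+a_k(\overline{w})$ with single-valued algebraic coefficients $a_i$. For $\overline{w}$ ranging over the dense set $H\cap U'$, where $H=\{w_i-w_j=n\}$ is a $P_V$-exceptional hyperplane, $X_V(\overline{w})$ is not strongly minimal, so by the biconditional and Fact~\ref{P2}(1) every root of $p(\cdot,\overline{w})$ lies in the discrete set $1/2+\mathbb{Z}$; being algebraic in $\overline{w}$ and discrete, the root multiset is constant on the irreducible $H$, whence each $a_i$ is constant on $H$. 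Running this along two transverse families of exceptional hyperplanes, say the level sets of $w_1-w_2$ and of $w_1-w_3$, and comparing values along their common lines forces each $a_i$ to be globally constant; then the branches are constants lying in $1/2+\mathbb{Z}$, so $\alpha=$ one of them $\in 1/2+\mathbb{Z}\subseteq\mathbb{Q}$, contradicting that $\alpha$ is generic.

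I expect this dependent case to be the main obstacle: one must rule out the \emph{a priori} possibility of a conspiracy in which $\alpha$ hits $1/2+\mathbb{Z}$ precisely over the $P_V$-exceptional locus. The discreteness of both exceptional sets, the algebraicity of the branches (handled cleanly through their elementary symmetric functions to avoid monodromy), and the genericity $\alpha\notin\mathbb{Q}^{alg}$ are exactly what defeat such a conspiracy, with the two-transverse-families step as the technical heart. The independent case, by contrast, is a verbatim adaptation of Proposition~\ref{Ort1}.
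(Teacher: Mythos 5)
Your proposal reaches the correct conclusion, and its opening reduction (invoking Fact~\ref{weaklyort}, using strict disintegratedness to get a definable bijection, and passing to the constructible trace $D$ on the constants) matches the paper. But in the decisive case where $\alpha\in\mathbb{Q}(\overline{v})^{alg}$ your route is genuinely different from, and considerably heavier than, the paper's. The paper never moves $\alpha$ and never studies the correspondence as a variety: it existentially quantifies out one of the $v_i$ (which one depends on which subfield $\mathbb{Q}(v_{i_1},v_{i_2})^{alg}$ actually contains $\alpha$, giving three sub-cases), arranging that among the retained coordinates some $v_i$ is generic over $\mathbb{Q}(\alpha,v_j)$; since the $P_V$-exceptional condition $v_i-v_j\in\mathbb{Z}$ constrains only two coordinates, strong minimality of $\mathbb{C}$ then lets one push that single $v_i$ into $v_j+\mathbb{Z}$ while $\alpha$ stays generic, so a strongly minimal $X_{II}(\alpha)$ ends up in bijection with a non--strongly-minimal $X_V$, exactly as in Proposition~\ref{Ort1}. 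Your alternative --- forcing the $u$-branches over $W$ into $1/2+\mathbb{Z}$ along two transverse families of exceptional hyperplanes and concluding they are globally constant, so $\alpha\in 1/2+\mathbb{Z}$ --- does work and has the merit of avoiding the sub-case analysis on the exact dependence of $\alpha$ on $\overline{v}$, but it needs two repairs you currently elide: (a) $\overline{D}$ may still have dimension $4$ when $\alpha\in\mathbb{Q}(\overline{v})^{alg}$ (you should take $V$ to be the $\mathbb{Q}^{alg}$-locus of $(\alpha,\overline{v})$, not ``the component of $\overline{D}$''); and (b) $D$ is only dense in $V$, so ``every root of $p(\cdot,\overline{w})$ lies in $1/2+\mathbb{Z}$'' can fail over finitely many entire hyperplanes of each family and over a proper closed subset of each remaining one --- harmless, since you only need infinitely many good hyperplanes per family, but it must be said before the symmetric-function argument is run. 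In short, the paper's quantifier trick buys brevity; your rigidity argument buys uniformity, at the price of these additional checks.
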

\begin{proof}Assume for contradiction the $X_{II}(\alpha)$ is non weakly orthogonal to the generic $X_{V}(v_{1},v_{2}$, $v_{3},v_4)$. Recall that $v_4=-(v_1+v_2+v_3)$ and so for notational simplicity we write $X_{V}(v_{1},v_{2}$, $v_{3},-(v_1+v_2+v_3))$ as $X(v_{1},v_{2}$, $v_{3})$.\\
{\em Case (i): $\alpha$, $v_{1}$ $v_{2}$ and $v_{3}$ are mutually generic.} Then one uses the same proof as that of Proposition \ref{Ort1}.\\
\noindent {\em Case (ii): $\alpha\in\mathbb{Q}(v_{1},v_{2},v_{3})^{alg}$.} For contradiction, suppose that $X_{II}(\alpha)$ is non weakly orthogonal to $X(v_{1},v_{2},v_{3})$. So we have as before a formula $\sigma(u,w_1,w_2,w_3,x)$ such that $\mathcal{U}\models\sigma(\alpha,v_{1},v_{2},v_{3},t)$ and this expresses that $X_{II}(\alpha)$ is in bijection with $X(v_{1},v_{2},v_{3})$. We then have to consider three sub-cases:\\
\noindent {\em Sub-case (i): $\alpha\not\in\mathbb{Q}(v_{i},v_{j})^{alg}$ for any $i\neq j$.} All we have to do is quantify over $v_3$ say, that is we use the fact that $\mathcal{U}\models\exists v_{3}\sigma(\alpha,v_{1},v_{2},v_{3},t)$. As $\alpha\not\in\mathbb{Q}(v_{1},v_{2})^{alg}$ and $\mathbb{C}$ is strongly minimal we can as before find $\tilde{v}_1\in v_2+\mathbb{Z}$ such that $\mathcal{U}\models\exists v_{3}\sigma(\alpha,\tilde{v}_1,v_{2},v_{3},t)$. Choosing any $\tilde{v}_3\in\mathbb{C}$ witnessing this, we get a bijection between $X_{II}(\alpha)$ and $X(\tilde{v}_1,v_{2},\tilde{v}_3)$ (but again $X_{V}(\tilde{v}_1,v_{2},\tilde{v}_3,-(\tilde{v}_1+v_{2}+\tilde{v}_3))$ is not strongly minimal by Fact \ref{P5}). This gives our desired contradiction.\\
\noindent {\em Sub-case (ii): $\alpha\in\mathbb{Q}(v_{1},v_{2})^{alg}$ say but not in $\mathbb{Q}(v_{1})^{alg}$ and $\mathbb{Q}(v_{2})^{alg}$.} First note that as $v_{1}$, $v_{2}$ and $v_{3}$ are mutually generic we have that $\alpha$ is not in $\mathbb{Q}(v_{3})^{alg}$, $\mathbb{Q}(v_{1},v_{3})^{alg}$ or $\mathbb{Q}(v_{2},v_{3})^{alg}$. So this time we quantify over $v_2$ say and look at $\mathcal{U}\models\exists v_{2}\sigma(\alpha,v_{1},v_{2},v_{3},t)$. This allows us again to find $\tilde{v}_1\in v_3+\mathbb{Z}$ such that $\mathcal{U}\models\exists v_{2}\sigma(\alpha,\tilde{v}_1,v_{2},v_{3},t)$. Finally any $\tilde{v}_2\in\mathbb{C}$ making this true leads us to our contradiction.\\
\noindent {\em Sub-case (iii): $\alpha\in\mathbb{Q}(v_{1})^{alg}$ say.} This time all we have to do is to quantify over $v_1$ and the same argument works.
\end{proof}
For the other cases, we need to change a little bit our strategy. We will this time use the results on algebraic solutions of the Painlev\'e equations.
\begin{prop}\label{algebraicproof1}
The generic $X_{II}(\alpha)$ is orthogonal to the generic $X_{III}(v_{1},v_{2})$.
\end{prop}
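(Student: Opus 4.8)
The plan is to argue by contradiction, keeping the scheme of Proposition \ref{Ort1}, but replacing the ``specialise to a non strongly minimal parameter'' step (which succeeded for $P_V$ precisely because $X_V$ is \emph{strictly} disintegrated) by an argument built on the classification of algebraic solutions, since $X_{III}(v_1,v_2)$ is only $\omega$-categorical. So assume $X_{II}(\alpha)$ is non weakly orthogonal to $X_{III}(v_1,v_2)$; by Fact \ref{weaklyort} this gives an infinite finite-to-finite relation $R\subseteq X_{II}(\alpha)\times X_{III}(v_1,v_2)$ defined by a formula $\varphi(x,y,\alpha,v_1,v_2,t)$ over $K=\mathbb{Q}(\alpha,v_1,v_2,t)^{alg}\subseteq\mathbb{C}(t)^{alg}$. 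Since the projections of $R$ are finite-to-one and $R$ is defined over $\mathbb{C}(t)^{alg}$, every $(z,y)\in R$ satisfies $z\in\mathbb{C}(t)\gen{y}^{alg}$ and $y\in\mathbb{C}(t)\gen{z}^{alg}$; in particular $R$ carries solutions algebraic over $\mathbb{C}(t)$ to solutions algebraic over $\mathbb{C}(t)$ in both directions.

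The first, and crux, step is to see that $R$ is \emph{total on both sides} for generic parameters. The complements $X_{II}(\alpha)\setminus\pi_1(R)$ and $X_{III}(v_1,v_2)\setminus\pi_2(R)$ are finite (both sets are strongly minimal and the images are infinite) and definable over $K$, hence contained in $\mathrm{acl}(K)=K\subseteq\mathbb{C}(t)^{alg}$; so their elements are solutions algebraic over $\mathbb{C}(t)$. For generic $\alpha$ and generic $v_1,v_2$ there are none by Fact \ref{P2}(2) and Fact \ref{P3}(2), so both complements are empty. I would then take $\sigma(u,w_1,w_2,x)$ to be the $L_\partial$-formula expressing that $\varphi(\cdot,\cdot,u,w_1,w_2,x)$ defines between $X_{II}(u)$ and $X_{III}(w_1,w_2)$ an infinite relation whose two projections are surjective and (uniformly, by $\omega$-stability) finite-to-one. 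Thus $\mathcal{U}\models\sigma(\alpha,v_1,v_2,t)$, and because the field of constants $\mathbb{C}$ is stably embedded and purely a field, the set $D=\{(u,w_1,w_2)\in\mathbb{C}^3:\mathcal{U}\models\sigma(u,w_1,w_2,t)\}$ is constructible and contains the generic point of the locus $W$ of $(\alpha,v_1,v_2)$; hence $D$ is Zariski dense in $W$.

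The second step is the transfer of algebraic solutions. Fix $(\tilde\alpha,\tilde v_1,\tilde v_2)\in D$. Totality together with the algebraicity observation above shows that $X_{II}(\tilde\alpha)$ has a solution algebraic over $\mathbb{C}(t)$ if and only if $X_{III}(\tilde v_1,\tilde v_2)$ does: by totality an algebraic solution on one side has a partner, and by the finite-to-one projection this partner is forced into $\mathbb{C}(t)^{alg}$. Writing $B_{II}=\{\alpha'\in\mathbb{Z}\}$ for the parameter locus where $P_{II}(\alpha')$ has an algebraic solution (Fact \ref{P2}(2)) and $B_{III}=\{v_2'-v_1'-1\in 2\mathbb{Z}\}\cup\{v_2'+v_1'+1\in 2\mathbb{Z}\}$ for the corresponding $P_{III}$ locus (Fact \ref{P3}(2)), this says exactly that $B_{II}\cap D=B_{III}\cap D$.

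Finally I would derive a contradiction from the incompatible shapes of $B_{II}$ and $B_{III}$. When $\alpha,v_1,v_2$ are mutually generic, $W=\mathbb{C}^3$; a hyperplane $H=\{v_2'-v_1'=2k+1\}\subseteq B_{III}$ meets $D$ in a set dense in $H$, which would then lie in $B_{II}=\bigcup_n\{\alpha'=n\}$, impossible since a dense (uncountable) subset of the two-dimensional $H$ cannot be covered by the countable union of the one-dimensional slices $H\cap\{\alpha'=n\}$. When $\alpha\in\mathbb{Q}(v_1,v_2)^{alg}$, the coordinate $\alpha'$ is a non-constant algebraic function on the two-dimensional $W$ (non-constant because $\alpha\notin\mathbb{Q}^{alg}$); I would pick whichever of the two families of hyperplanes composing $B_{III}$ has slices $W\cap H$ along which $\alpha'$ is non-constant (at least one works, else $\alpha'$ would factor through both $v_2-v_1$ and $v_1+v_2$ and be constant), and then a dense uncountable subset of the irreducible curve $W\cap H$ would have to satisfy $\alpha'\in\mathbb{Z}$, again impossible. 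The main obstacle is the first step: because $X_{III}(v_1,v_2)$ is only $\omega$-categorical, $R$ need not be a bijection, and one must check carefully that its only ``gaps'' occur at algebraic solutions and that totality, once it holds at generic parameters, can be propagated via $\sigma$ to the special ones; once this is secured, the transfer of algebraic solutions and the geometric contradiction are routine.
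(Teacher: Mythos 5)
Your proof is correct, but its endgame is genuinely different from the paper's. Both arguments share the same skeleton: reduce to non weak orthogonality via Fact \ref{weaklyort}, package the correspondence into a formula $\sigma$ in the constant parameters, and exploit the fact that a finite-to-finite relation defined over $\mathbb{C}(t)^{alg}$ with surjective projections matches algebraic solutions with algebraic solutions. The divergence is in how the contradiction is extracted. The paper specializes the parameters one at a time (strong minimality of $\mathbb{C}$ plus existential quantification over the remaining ones) to land on $(\hat v_1,\tilde v_2)$ where $X_{III}$ has exactly four algebraic solutions (Fact \ref{P3}(3)), and then uses the precise ``$1$ to $\leq 2$'' shape of the correspondence --- strict disintegratedness of $X_{II}$ together with Remark \ref{RP3} --- to contradict the fact that $X_{II}(\tilde\alpha)$ has at most one algebraic solution: one point with at most two partners cannot cover four. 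You use only the existence loci $B_{II}$ and $B_{III}$ (Facts \ref{P2}(2), \ref{P3}(2)) and no fiber bound beyond uniform finiteness, compensating with the geometric observation that the constructible, Zariski-dense set $D$ of good parameters would force a dense subset of a $B_{III}$-hyperplane into the countable union of $B_{II}$-hyperplanes, which a dimension/cardinality count forbids; you also make explicit the surjectivity of the projections (via the finite complements consisting of algebraic points), which the paper needs but leaves implicit. Your version buys independence from Remark \ref{RP3} and from the exact count of four; strikingly, it appears to apply verbatim to the pairs $(X_{II},X_{IV})$ and $(X_{III},X_{V})$, which the paper's closing remark says its method cannot reach (there $B_{IV}$ and $B_{V}$ have codimension $\geq 2$, making the mismatch with the codimension-one $B_{II}$, $B_{III}$ even starker) --- that is either a real bonus or the place to scrutinize hardest. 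The points that need care, and that you should spell out, are: (a) $D$ is definable over $\mathbb{Q}^{alg}$ because $\sigma$ carries no parameter besides $t$ and $\mathbb{C}\cap \mathrm{acl}(t)=\mathbb{Q}^{alg}$, which is what turns ``contains the generic point of $W$'' into ``contains a dense open subset of $W$''; (b) finitely many hyperplanes or slices may be swallowed by $W\setminus D$, or may have $\alpha'$ constant on them, and must be discarded (there are infinitely many to choose from); and (c) any auxiliary parameters of $\varphi$ lying in $\mathbb{Q}(\alpha,v_1,v_2,t)^{alg}$ but not in $\mathbb{Q}(\alpha,v_1,v_2,t)$ must be absorbed into the formula before forming $\sigma$. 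The paper's route, by contrast, buys a shorter and more elementary conclusion that needs neither stable embeddedness of the constants nor any dimension theory, at the price of the finer numerical input.
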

\begin{proof}
For contradiction, suppose that $X_{II}(\alpha)$ is non weakly orthogonal to $X_{III}(v_{1},v_{2})$. As before, we get a formula $\sigma(u,w_1,w_2,x)$ such that $\mathcal{U}\models\sigma(\alpha,v_{1},v_{2},t)$ and this expresses that there is a ``1 to $\leq$2" map between $X_{II}(\alpha)$ and $X_{III}(v_{1},v_{2})$. This follows from strict disintegratedness of $X_{II}(\alpha)$ and $\omega$-categoricity of $X_{III}(v_{1},v_{2})$ (more precisely Remark \ref{RP3} gives that for any  $y\in X_{III}(v_{1},v_{2})$  the cardinality of $X_{III}(v_{1},v_{2}) \cap\mathbb{C}(t)\gen{y}^{alg}$ is at most 2).\medskip

We then quantify over $\alpha$, that is use that $\mathcal{U}\models\exists\alpha\sigma(\alpha,v_{1},v_{2},t)$. As $v_1$ and $v_2$ are mutually generic, we can first choose $\tilde{v}_1\in (v_2-1)+2\mathbb{Z}$ (and have $\mathcal{U}\models\exists\alpha\sigma(\alpha,\tilde{v}_{1},v_{2},t)$) and then $\tilde{v}_2\in\mathbb{Z}$ so that $\mathcal{U}\models\exists\alpha\sigma(\alpha,\hat{v}_{1},\tilde{v}_{2},t)$ where $\hat{v}_1=\tilde{v}_{2}-1+2m$ for some $m\in\mathbb{Z}$. Choosing any $\tilde{\alpha}\in\mathbb{C}$ witnessing this, we have that there is a ``1 to $\leq$2" map between $X_{II}(\tilde{\alpha})$ and $X_{III}(\hat{v}_{1},\tilde{v}_{2})$. By Fact \ref{P1}, $Y_{II}(\tilde{\alpha})$ contain at most 1 algebraic solution whereas by Fact \ref{P3}(3), $X_{III}(\hat{v}_{1},\tilde{v}_{2})$ has exactly 4 algebraic solutions. So we get a contradiction.
\end{proof}
Similarly one has the following
\begin{prop}{\color{white} dF}
\begin{enumerate}[(i)]
\item The generic $X_{III}(v_{1},v_{2})$ is orthogonal to the generic $X_{IV}(\alpha,\beta)$.
\item The generic $X_{IV}(\alpha,\beta)$ is orthogonal to the generic $X_{V}(\alpha_1,\beta_1,\gamma_1)$.
\end{enumerate}
\end{prop}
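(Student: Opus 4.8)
The plan is to run, for each item, the algebraic-solution counting argument of Proposition~\ref{algebraicproof1}. Suppose for contradiction (invoking Fact~\ref{weaklyort}) that the two sets are non weakly orthogonal, so there is an infinite finite-to-finite relation $R$ whose defining formula has parameters only among the relevant $\alpha,\beta,v_{1},v_{2}$ (resp. $\alpha_1,\beta_1,\gamma_1$) and $t$. Because at generic parameters each of these sets is strongly minimal with no solution in $\mathbb{C}(t)^{alg}$, the projections of $R$ omit no base-algebraic point, so $R$ may be taken \emph{total}; I build the defining formula $\sigma$ to assert that $R$ is a total correspondence with the fibre bounds forced by the structure of the two sides (fibre size $1$ on a strictly disintegrated side; fibre size $\le 2$ on an $\omega$-categorical side, via Remark~\ref{RP3}). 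Totality is the point that makes the subsequent count legitimate: after specialisation every algebraic solution of one side must be matched.

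For (i), $X_{III}(v_{1},v_{2})$ is $\omega$-categorical with at most two conjugate solutions (Remark~\ref{RP3}) and $X_{IV}(\alpha,\beta)$ is strictly disintegrated (Fact~\ref{P4}), so $\sigma(w_1,w_2,u_1,u_2,x)$ expresses a total ``$1$ to $\le 2$'' correspondence between $X_{IV}(u_1,u_2)$ and $X_{III}(w_1,w_2)$, with $\mathcal{U}\models\sigma(\alpha,\beta,v_{1},v_{2},t)$. I then existentially quantify out the \emph{entire} $P_{IV}$-parameter, using $\mathcal{U}\models\exists u_1\exists u_2\,\sigma(u_1,u_2,v_{1},v_{2},t)$; this absorbs any algebraic relation between $(\alpha,\beta)$ and $(v_{1},v_{2})$ and so dispenses with the sub-case analysis seen in the $P_{II}$--$P_{V}$ proposition, leaving the mutually generic $v_{1},v_{2}$ free to move. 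Exactly as in Proposition~\ref{algebraicproof1}, strong minimality of $\mathbb{C}$ lets me pick $\tilde v_1\in(v_2-1)+2\mathbb{Z}$ and then $\tilde v_2\in\mathbb{Z}$, reaching parameters $\hat v_1=\tilde v_2-1+2m$, $\tilde v_2$ with \emph{both} $v_2-v_1-1\in2\mathbb{Z}$ and $v_2+v_1+1\in2\mathbb{Z}$. By Fact~\ref{P3}(3) the set $X_{III}(\hat v_1,\tilde v_2)$ has exactly four algebraic solutions, whereas any witnessing $X_{IV}(\tilde\alpha,\tilde\beta)$ has at most one by Fact~\ref{P4}(2). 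Totality forces each of the four algebraic solutions of $X_{III}$ to match an algebraic solution of $X_{IV}$, yet each algebraic solution of $X_{IV}$ can receive at most two; so $4\le2\cdot1$, a contradiction.

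For (ii), both $X_{IV}(\alpha,\beta)$ and $X_{V}(\alpha_1,\beta_1,\gamma_1)$ are strictly disintegrated (Facts~\ref{P4} and~\ref{P5}), so $\sigma$ now asserts a total \emph{bijection}, and a definable bijection over $\mathbb{C}(t)^{alg}$ matches algebraic solutions bijectively. Once more I quantify out the $P_{IV}$-parameters and move the mutually generic $\alpha_1,\beta_1,\gamma_1$ one coordinate at a time. The decisive asymmetry is that, unlike $P_{IV}$, the equation $P_{V}$ can have \emph{two} algebraic solutions: by Remark~\ref{RP5}(ii), in case 2(i) of Fact~\ref{P5} with $\gamma\in\mathbb{Z}$ and $\alpha\beta\neq0$ there are exactly two. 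Accordingly I specialise $\gamma_1$ to an integer $g$, then $\beta_1$ to $-\frac{1}{2}n^2$ with $n>0$, and $\alpha_1$ to $\frac{1}{2}(m+g)^2$ with $m+n$ odd and $m+g\neq0$; at each step infinitely many targets are admissible, so strong minimality of $\mathbb{C}$ avoids the finitely many exceptional values. The resulting $X_{V}$ has exactly two algebraic solutions while the witness $X_{IV}(\tilde\alpha,\tilde\beta)$ has at most one, contradicting that they are in bijection.

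The principal obstacle is not any single computation but the correct orchestration of the specialisation: one must existentially quantify away the full parameter tuple of exactly one family (which is what removes the dependency bookkeeping) and then land the other, mutually generic, family in an exceptional stratum where the \emph{uniform} bound ``at most one algebraic solution'' for the quantified-out family (the uniqueness clauses of Facts~\ref{P4} and~\ref{P5}) is incompatible, under the fibre bounds of $R$, with the larger algebraic-solution count forced on the specialised family (four for $P_{III}$, two for $P_{V}$). The one genuinely delicate verification is the totality of $R$, which is what allows every algebraic solution to be transported; it holds precisely because at generic parameters these equations have no solution in $\mathbb{C}(t)^{alg}$, so the finite-to-finite correspondence has no exceptional points to omit.
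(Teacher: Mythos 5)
Your proposal is correct and follows essentially the same route as the paper: quantify out the $P_{IV}$ parameters, use strong minimality of $\mathbb{C}$ to move the remaining mutually generic parameters into the stratum where $P_{III}$ has four (resp.\ $P_V$ has exactly two) algebraic solutions, and contradict the uniqueness of the algebraic solution of $P_{IV}$ via the fibre bounds of the correspondence. The extra care you take over totality of $R$ and the transport of algebraic solutions is detail the paper leaves implicit, not a different argument.
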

\begin{proof}
For (i) the exact same proof as Proposition \ref{algebraicproof1}, where one just needs to quantify over $\alpha$ and $\beta$, works.\\
For (ii) Arguing by contradiction, we have $\mathcal{U}\models\sigma(\alpha,\beta,\alpha_1,\beta_1,\gamma_1,t)$ witnessing that there is a bijection between $X_{IV}(\alpha,\beta)$ and  $X_{V}(\alpha_1,\beta_1,\gamma_1)$. Again by quantifying over $\alpha$ and $\beta$ and moving $\alpha_1,\beta_1$ and $\gamma_1$ into an appropriate set where $X_{V}$ has 2 algebraic solutions (as given to us by Remark \ref{RP5}(ii)), we are done as $X_{IV}(\alpha,\beta)$ can only have at most 1 algebraic solution.
\end{proof}
Finally we look at orthogonality to the sixth Painlev\'e equation. 
\begin{prop}
The generic $X_{VI}(\overline{\alpha})$ is orthogonal to the generic $X_{II}(\alpha)$, $X_{III}({v_{1},v_{2}})$, $X_{IV}(\alpha,\beta)$ and $X_{V}(\alpha,\beta,\gamma)$. 
\end{prop}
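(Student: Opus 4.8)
The plan is to run the algebraic–solution argument of Proposition~\ref{algebraicproof1} uniformly against all four target families, exploiting the single feature that distinguishes $X_{VI}$ from the rest: by the (unlabelled) fact recorded after Fact~\ref{P6}, pushing $\alpha_0,\alpha_1,\alpha_3,\alpha_4$ into $1/2+\mathbb{Z}$ makes $X_{VI}(\overline{\alpha})$ acquire \emph{infinitely many} algebraic solutions over $\mathbb{C}(t)$, whereas each of $X_{II},X_{III},X_{IV},X_{V}$ carries only a uniformly bounded finite number of them, whatever its parameters. An infinite-to-finite mismatch under a finite-to-finite correspondence will then give the contradiction.

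Fix a target family, say the generic $X_{II}(\alpha)$, and suppose for contradiction it is non weakly orthogonal to the generic $X_{VI}(\overline{\alpha})$. By $\omega$-categoricity of $X_{VI}(\overline{\alpha})$ (Fact~\ref{P6}(2)) and strict disintegratedness (resp.\ $\omega$-categoricity) of the target, the witness is a total finite-to-finite (``$1$ to $\leq k$'') correspondence $R$, definable over $\mathbb{Q}(\overline{\alpha},\alpha,t)^{alg}$. As in Proposition~\ref{Ort1} I encode it by a formula $\sigma(\overline{w},\overline{u},t)$ in the $X_{VI}$-parameters $\overline{w}=(w_0,w_1,w_3,w_4)$ and the target parameters $\overline{u}$, so that $\mathcal{U}\models\sigma(\overline{\alpha},\alpha,t)$, where $\sigma$ includes the totality clauses $\forall x\,\exists^{\geq 1,\,\leq k}y$ and $\forall y\,\exists^{\geq 1,\,\leq k}x$.

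Next I quantify over the target parameters, $\mathcal{U}\models\exists\overline{u}\,\sigma(\overline{\alpha},\overline{u},t)$, and then move $\overline{\alpha}$ into the exceptional region. Since $\alpha_0,\alpha_1,\alpha_3,\alpha_4$ are mutually generic and $\mathbb{C}$ is strongly minimal, I move them one coordinate at a time: with the others fixed, the definable set of admissible values of the moving coordinate contains its generic value, hence is cofinite in $\mathbb{C}$ and so meets $1/2+\mathbb{Z}$; each substituted value being rational, the remaining coordinates stay generic and the procedure iterates. Only the four free parameters are moved, $\alpha_2$ being determined by $\alpha_0+\alpha_1+2\alpha_2+\alpha_3+\alpha_4=1$. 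This produces $\overline{\alpha}^{*}\in(1/2+\mathbb{Z})^4$ with $\mathcal{U}\models\exists\overline{u}\,\sigma(\overline{\alpha}^{*},\overline{u},t)$; picking a constant witness $\overline{a}^{*}$ gives $\mathcal{U}\models\sigma(\overline{\alpha}^{*},\overline{a}^{*},t)$, i.e.\ a total finite-to-finite correspondence $R^{*}$ between $X_{VI}(\overline{\alpha}^{*})$ and the target at $\overline{a}^{*}$, now defined over $\mathbb{C}(t)^{alg}$. Since $X_{VI}(\overline{\alpha}^{*})$ has infinitely many algebraic solutions and $R^{*}$ is total and defined over $\mathbb{C}(t)^{alg}$, each such $y\in\mathbb{C}(t)^{alg}$ has a partner $z\in\mathbb{C}(t)\langle y\rangle^{alg}=\mathbb{C}(t)^{alg}$, an algebraic solution of the target; as $\pi_2\!\restriction_{R^{*}}$ has finite fibres, the target must then have infinitely many algebraic solutions. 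This is absurd: for every value of its parameters $X_{II}$ has at most one (Fact~\ref{P2}(2)), $X_{III}$ at most four (Fact~\ref{P3}), $X_{IV}$ at most one (Fact~\ref{P4}(2)), and $X_{V}$ at most two (Remark~\ref{RP5}). Quantifying instead over the two, two, and three parameters of $X_{III},X_{IV},X_{V}$ settles those cases word for word.

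The step I expect to be the genuine obstacle is justifying that the correspondence may be taken \emph{total}, so that it actually covers the infinitely many algebraic solutions of $X_{VI}(\overline{\alpha}^{*})$ rather than omitting them. A nonorthogonality witness a priori has only cofinite projections, and the finitely many omitted points of $X_{VI}(\overline{\alpha})$ are exactly the solutions lying in the algebraic closure of the base field. The point to verify is that for generic $\overline{\alpha}$ there are no such solutions, so that the original correspondence is already total; totality, being expressed by the $\exists^{\geq 1}$ clauses of $\sigma$, then transfers to $\overline{\alpha}^{*}$ along the parameter moves and the counting argument applies. Once this is in place, everything else is a routine repetition of the template of Proposition~\ref{algebraicproof1}.
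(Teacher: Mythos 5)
Your proposal is correct and is essentially the paper's own argument: quantify out the target parameters, move $\alpha_0,\alpha_1,\alpha_3,\alpha_4$ one at a time into $1/2+\mathbb{Z}$ using strong minimality of the constants, and derive a contradiction from $X_{VI}(\overline{\alpha}^{*})$ having infinitely many algebraic solutions while each of $X_{II},X_{III},X_{IV},X_{V}$ has only finitely many for any choice of parameters. The totality issue you flag at the end is not a genuine obstacle: the witnessing formula $\sigma$ can simply include a clause bounding the (finite) number of points of $X_{VI}$ omitted by the domain of $R$, and finitely many omissions cannot absorb infinitely many algebraic solutions, so the counting contradiction goes through without having to prove the correspondence total.
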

\begin{proof}
One just uses the same trick as the proof of Proposition \ref{algebraicproof1}: We want to prove that the generic $X_{VI}(\overline{\alpha})$ is orthogonal to $X(\overline{v})$, where $X(\overline{v})$ is any of the above generic sets.

Arguing by contradiction we get $\mathcal{U}\models\sigma(\overline{v},\alpha_0,\alpha_1,\alpha_3,\alpha_4,t)$ witnessing that there is a finite-to-finite map between $X_{VI}(\overline{\alpha})$ and $X(\overline{v})$. Quantifying over $\overline{v}$, we can move $\alpha_0,\alpha_1,\alpha_3$ and $\alpha_4$ one by one into the set of half integers. On one side by Fact \ref{P6} we then have infinitely many algebraic solutions whereas on the other side $X(\overline{\nu})$ (for any $\overline{\nu}$) can only have finitely many (Fact \ref{P2}, \ref{P3}, \ref{P4} and \ref{P5}). A contradiction.
\end{proof}

Finally one should note that the above arguments does not work when trying to show that generic $X_{II}(\alpha)$ are orthogonal to generic $X_{IV}(\alpha_1,\beta_1)$ and similarly that generic $X_{III}({v_{1},v_{2}})$ are orthogonal to generic $X_{V}(\alpha,\beta,\gamma)$. This is in part because the overall structure of each of the pairs of equations are very similar. 

\section*{Acknowledgement}
This research was mostly carried out at the University of Leeds, supported by an EPSRC Project Studentship and a School of Mathematics partial scholarship. I would like to thank Prof. Anand Pillay for his support and continuous encouragement. 

\end{document}